	\newcommand{\fdiff}[4]{\ensuremath{ \upsilon^{#4 }_{ #3 }  #1  \left(  #2 \right)  }}
	\newcommand{\fdiffplus}[3]{ \fdiff {#1}{#2}{+}{#3} }
	\newcommand{\fdiffmin}[3]{ \fdiff {#1}{#2}{-}{#3} }
	\newcommand{\fdiffpm}[3]{ \fdiff {#1}{#2}{\pm}{#3} }
	 \newcommand{\fracvar}[4]{\ensuremath{ \upsilon_{ #3 }^{ #4} \left[ #1 \right] \left(  #2 \right)   }}
	 \newcommand{\fracvarplus}[3]{ \fracvar {#1}{#2}{#3}{\epsilon+} }
	 \newcommand{\fracvarmin}[3]{ \fracvar {#1}{#2}{#3}{\epsilon -} }
	  \newcommand{\fracvarpm}[3]{ \fracvar {#1}{#2}{#3}{\epsilon \pm} }
	\newcommand{\llim}[3]{\ensuremath{ \lim\limits_{ #1 \rightarrow #2} #3 }}
	\newcommand{\fclass}[2]{\ensuremath{  \mathbb{#1}^{\, #2} }}
	\newcommand{\soc}[2]{\ensuremath{ \chi_{#1}^{#2} }}
	\newcommand{\holder}[1]{\fclass{H}{#1} }
	 \newcommand{\osc}[4]{\ensuremath{ \mathrm{osc}_{ #3 }^{ #4} [ #1 ] \left(  #2 \right)   }}
	\newcommand\smallO{
		\mathchoice
		{{\scriptstyle\mathcal{O}}}
		{{\scriptstyle\mathcal{O}}}
		{{\scriptscriptstyle\mathcal{O}}}
		{\scalebox{.7}{$\scriptscriptstyle\mathcal{O}$}}
	}
	\newcommand{\bigoh}[1]{ \ensuremath{ \smallO \left(  #1 \right) }   }
	\newcommand{\deltaop}[4]{\ensuremath{ \Delta_{ #3 }^{ #4} \left[ #1 \right] \left(  #2 \right)   }}
	\newcommand{\deltaplus}[2]{ \deltaop {#1}{#2}{\epsilon}{+} }
	\newcommand{\deltamin}[2]{ \deltaop {#1}{#2}{\epsilon}{-} }
	\newcommand{\sep}{,}
	\newcommand{\epnt}{\; .}
	\newcommand{\ecma}{\; ,}
\newcommand{\frdiffii}[3]{\ensuremath{\,_{#3}\mathbf{I}^{#1}_{#2}}}
\newcommand{\frdiffiix}[2]{\frdiffii{#1}{x}{#2}}
 \newtheorem{theorem}{Theorem}
 \newtheorem{lemma}{Lemma}
 \newtheorem{corollary}{Corollary}
 \newtheorem{proposition}{Proposition}
  \newtheorem{condition}{Condition}
 \newtheorem{property}{Property}
\newtheorem{definition}{Definition}
 \newtheorem{remark}{Remark}
 \newtheorem{example}{Example}
\begin{document}
  
   \title[Continuity of fractional velocity]{On the conditions for existence and continuity of fractional velocity \thanks{The work has been supported in part by a grant from Research Fund - Flanders (FWO), contract number 0880.212.840.}}
   
   \author {Dimiter Prodanov}
   \address{Correspondence: Environment, Health and Safety, IMEC vzw, Kapeldreef 75, 3001 Leuven, Belgium
   e-mail: Dimiter.Prodanov@imec.be, dimiterpp@gmail.com}

 
   	\begin{abstract}
	 H\"older functions represent mathematical models of nonlinear physical phenomena. This work investigates the general conditions of existence of fractional velocity as a localized generalization of ordinary derivative with regard to the exponent order.
	 Fractional velocity is defined as the limit of the difference quotient of the function's increment  and the difference of its argument raised to a fractional power.
	 A relationship to the point-wise  H\"older exponent of a function, its point-wise oscillation and the  existence of fractional velocity is established. 
	 It is demonstrated that wherever the fractional velocity of non-integral order is continuous then it vanishes. 
	 The work further demonstrates the use of fractional velocity as a tool for characterization of the discontinuity set of the derivatives of functions thus providing a natural characterization of strongly non-linear local behavior.
	 Finally the equivalence with the Kolwankar-Gangal local fractional derivative is investigated. 
  
   	 \medskip
   		
   		{\it MSC 2010\/}: Primary 26A27: Secondary 26A15 \sep \  26A33 \sep\ 26A16 \sep   47A52\sep  4102  
   		
 \smallskip
   		
{\it Key Words and Phrases}: 
fractional calculus;  
non-differentiable functions; 
H\"older classes; 
pseudodifferential operators;

 \end{abstract}

	   	\maketitle  
	   	
   	
   	\section{Introduction}
   	\label{seq:intro}
 	
   	Derivatives can be viewed as mathematical idealizations of the linear growth. 
   	On the other hand, mathematical descriptions of strongly non-linear phenomena necessitate certain relaxation of the linearity assumption. While this can be achieved in many ways, the present work focuses entirely on local descriptions in terms of limits of difference quotients.  	
   	Difference quotients of functions of fractional order have been considered initially by du Bois-Reymond  \cite{BoisReymond1875} and later by Faber \cite{Faber1909} in their studies of the point-wise differentiability of functions. The concept implied what is now known as Holder-continuity of the function.
   	While these initial development followed from purely mathematical interest, later works were inspired from physical research questions. Cherbit \cite{Cherbit1991} and later on Ben Adda and Cresson  \cite{Adda2001} introduced the notion of \textsl{fractional velocity} as the limit of the fractional difference quotient. Their main application was the study of fractal phenomena and physical processes for which the instantaneous velocity was not well defined \cite{Cherbit1991}.  	
 
    Existence of the fractional velocity was demonstrated for some classes of functions in \cite{Prodanov2015, Prodanov2016}.
 	This work establishes further the general conditions of existence of fractional velocity using the formalism of \textsl{fractional variation} operators \cite{Prodanov2015}.
 	The  most fundamental result of the present work is that for fractional orders  fractional velocity is continuous only if it is zero.	
	The  set of discontinuities of fractional velocity is  characterized and used to describe the local change of the function in terms of the fractional Taylor-Lagrange property.
	Finally,  the relationship between fractional velocities and the localized versions of fractional derivatives in the sense of Kolwankar-Gangal is investigated.


\section{General definitions and notations}
  	\label{sec:definitions}
  	  	
  	The term \textit{function}  denotes a mapping $ f: \mathbb{R} \mapsto \mathbb{R} $ (or in some cases $\mathbb{C} \mapsto \mathbb{C}$). 
  	The notation $f(x)$ is used to refer to the value of the mapping at the point \textit{x}.
    The term \textit{operator}  denotes the mapping from functional expressions to functional expressions.
  	Square brackets are used for the arguments of operators, while round brackets are used for the arguments of functions.	
    $Dom[f]$ denotes the domain of definition of the function $f(x)$.
    The term Cauchy sequence will be always interpreted  as a null sequence.
 
  	 \begin{definition}[Asymptotic $\smallO$ notation]
   	 	\label{def:bigoh}
   	 	The notation  $\bigoh{x^\alpha}$ is interpreted as the convention that 
   	 	$
   	 	\llim{x}{0}{ \frac{\bigoh{x^\alpha}}{x^\alpha} } =0 
   	 	$ 
   	 	for $\alpha >0 $.
   	 	The notation $\bigoh{1}$ is interpreted as a Cauchy-null sequence.
	\end{definition}
   	 \begin{definition}
   	 	\label{def:holder}
   	 	We say that $f$ is of (point-wise) H\"older  class \holder{\beta} if for a given $x$ there exist two positive constants 
   	 	$C, \delta \in \mathbb{R} $ that for an arbitrary $ y \in Dom[ f ]$ and given $|x-y| \leq \delta$ fulfill the inequality
   	 	$
   	 	| f (x) - f (y) |  \leq C |x-y|^\beta
   	 	$, where $| \cdot |$ denotes the norm of the argument.  	
   	 	
   	 	For (mixed) orders $n +\beta $ ($n \in \mathbb{N}_0$) the  H\"older class \holder{n+ \beta}  designates the functions  for which the inequality 
   	 	\[
   	 	| f (x)  - P_n (x-y) |  \leq C |x-y|^{n +\beta}  
   	 	\]
   	 	holds under the same hypothesis for $C, \delta $ and \textit{y}. $P_n (.)$ designates the polynomial  
   	 	$
   	 	P_n (z) = f (y)+ \sum\limits_{k=1}^{n}{ a_k z^k} 
   	 	$.
   	 \end{definition}
   	 \begin{remark}
   	 	The polynomial $P_n(x)$ can be identified with the Taylor polynomial of order $n$ of $f(x)$
   	 	(see for example  \cite{Prodanov2015} ).	
   	 \end{remark}

  	\begin{definition}
  		\label{def:deltas}
  		Let the parametrized difference operators acting on a function $f(x)$ be defined in the following way
  		\begin{flalign*}
	  	\deltaplus{f}{x}   & :=  f(x + \epsilon) - f(x) \ecma\\
  		\deltamin{f}{x} & :=  f(x) - f(x - \epsilon)  
  		\end{flalign*}
  		where $\epsilon>0$. The first one we refer to as \textit{forward difference} operator, 
  		the second one we refer to as \textit{backward difference} operator and the third one as \textit{2\textsuperscript{nd} order difference} operator.
  	\end{definition}
  	
 	\section{Point-wise oscillation of functions}
  	\label{sec:osc}
  	
  	The concept of point-wise oscillation is used to characterize the set of continuity of a function.
  	To this end I build further on a technical result, which is presented as a Theorem 3.5.2 in Trench \cite{Trench2013}[p. 173]. 
  	Here the proof is slightly modified to account for separate treatment of right- and left- continuity.
  	 
  	\begin{definition}
  		\label{def:limosc}
		Define forward oscillation and its limit as the operators
	    \begin{flalign*}
	        	\osc{f}{x}{\epsilon}{+} : =  & \sup_{[x , x + \epsilon]} {[ f]} -
	        		\inf_{[x , x + \epsilon]} {[ f]} \\
	     	\mathrm{osc^{+}} [f] (x) : = & \llim{\epsilon}{0}{ \left( \sup_{[x , x + \epsilon]}   -
	     		\inf_{[x , x + \epsilon]} \right) f} = \llim{\epsilon}{0}{\osc{f}{x}{\epsilon}{+}}
	    \end{flalign*}
 		and backward oscillation and its limit as the operators
		\begin{flalign*}
			\osc{f}{x}{\epsilon}{-} : =  & \sup_{[x - \epsilon , x ]} {[ f]} -
				\inf_{[x -   \epsilon, x ]} {[ f]} \\
		\mathrm{osc^{-}} [f] (x) : = & \llim{\epsilon}{0}{ \left( \sup_{[x - \epsilon , x ]}   -
			\inf_{[x -   \epsilon, x ]}\right)  f} = \llim{\epsilon}{0}{\osc{f}{x}{\epsilon}{-}}
		\end{flalign*}
		according to previously introduced notation \cite{Prodanov2015}.
  	\end{definition}
  	
	
	 \begin{lemma}[Oscillation lemma]
	 \label{th:osc}
	 
	 Let $I_{+} :=\left[x, x+ \epsilon \right] \subseteq Dom[f]$  then $f$  is \underline{right-continuous} for $x \in I_{+} $ iff  $\mathrm{osc^{+}} [f] (x)=0 $.	 
	 Alternatively, 
	 Let $I_{-} :=\left[ x - \epsilon, x \right] \subseteq Dom[f]$  then $f$  is \underline{left-continuous} for $x \in I_{-} $ iff  $\mathrm{osc^{-}} [f] (x)=0 $.
	 \end{lemma}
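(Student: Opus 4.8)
The plan is to establish both biconditionals by a direct $\epsilon$--$\eta$ argument, and to treat only the forward (right-continuous) case in detail, since the backward statement follows verbatim after replacing the interval $[x, x+\epsilon]$ by $[x-\epsilon, x]$ (equivalently, by the reflection $x \mapsto -x$). The key preliminary observation is that the map $\epsilon \mapsto \osc{f}{x}{\epsilon}{+}$ is non-negative and monotonically non-decreasing in $\epsilon$, because shrinking the interval $[x, x+\epsilon]$ can only lower the supremum and raise the infimum of $f$ over it. Consequently the limit $\mathrm{osc^{+}} [f] (x) = \llim{\epsilon}{0}{\osc{f}{x}{\epsilon}{+}}$ exists as the infimum of the family over $\epsilon > 0$, which makes the assertion $\mathrm{osc^{+}} [f] (x) = 0$ well-posed.

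For the implication that vanishing oscillation forces right-continuity, I would fix $\eta > 0$ and invoke the hypothesis to select $\delta > 0$ with $\osc{f}{x}{\epsilon}{+} < \eta$ for all $0 < \epsilon < \delta$. Then for any $y$ with $0 \leq y - x < \delta$, choosing some $\epsilon \in (y - x, \delta)$, both $f(y)$ and $f(x)$ lie in the interval $[\inf_{[x, x+\epsilon]} f,\ \sup_{[x, x+\epsilon]} f]$, so that $|f(y) - f(x)| \leq \osc{f}{x}{\epsilon}{+} < \eta$. This is precisely right-continuity at $x$.

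For the converse, I would fix $\eta > 0$ and use right-continuity to obtain $\delta > 0$ with $|f(y) - f(x)| < \eta/2$ whenever $0 \leq y - x < \delta$; the triangle inequality then yields $|f(y) - f(z)| < \eta$ for all $y, z \in [x, x+\epsilon]$ when $0 < \epsilon < \delta$. The one delicate point --- and the step I expect to require the most care --- is passing from this pairwise bound to a bound on $\sup f - \inf f$, since the supremum and infimum need not be attained. I would resolve this by a standard approximation argument: for arbitrary $\gamma > 0$ pick $y, z \in [x, x+\epsilon]$ so that $f(y)$ exceeds the supremum minus $\gamma$ and $f(z)$ falls below the infimum plus $\gamma$, whence $\osc{f}{x}{\epsilon}{+} < \eta + 2\gamma$, and then let $\gamma \to 0$. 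This gives $\osc{f}{x}{\epsilon}{+} \leq \eta$ for all $0 < \epsilon < \delta$, so $\mathrm{osc^{+}} [f] (x) \leq \eta$; as $\eta > 0$ was arbitrary and the oscillation is non-negative, the limit must vanish. The backward statement is obtained by the identical argument carried out on $[x-\epsilon, x]$.
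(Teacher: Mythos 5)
Your proof is correct and follows essentially the same route as the paper's: one direction squeezes $|f(y)-f(x)|$ between the infimum and supremum over $[x,x+\epsilon]$ so that the vanishing oscillation forces right-continuity, the converse uses right-continuity plus the triangle inequality to bound pairwise differences and hence the oscillation, and the backward case is dispatched by reflection. Your version is in fact slightly more careful at the two points the paper glosses over: you observe the monotonicity of $\epsilon \mapsto \mathrm{osc}_{\epsilon}^{+}[f](x)$ so that the limit is well-posed, and you treat the possibly unattained supremum and infimum by an explicit approximation argument, where the paper simply ``maps'' $f(x^{\prime})$ and $f(x^{\prime\prime})$ to the infimum and supremum by appeal to the least-upper-bound property.
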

	 \begin{proof}
	 	\begin{description}
	 	\item[Forward case]  
		 Suppose that  $\mathrm{osc^{+}} [f] (x)=0 $. 
		 We select a variable $x_0  \in \left[x, x+ \epsilon \right] $. 
		 Then by continuity we have 
		 \[
		\inf_\epsilon f (x) \leq f(x_0)	 \leq \sup_\epsilon f (x) 
		 \]
		 so that
		  \[
		  0 \leq f(x_0)	-  \inf_\epsilon f (x) \leq \sup_\epsilon f (x) -\inf_\epsilon f (x) = \mathrm{osc_{\epsilon}^{+}} [f] (x) < \mu
		  \]
		  by hypothesis for some positive $\mu  $  
		and  $h = x^{\prime} -x_0  \leq \epsilon $ assuming  $\inf_\epsilon f (x) \equiv  f (x^{\prime} )$.
		 But then since $x_0$ is free we can set $x_0 \mapsto x$, therefore
		 \[
		  0 \leq f(x )	-  \inf_\epsilon f (x) < \mu
		 \]
		 for $h = x^{\prime} -x   \leq \epsilon $.

		 In a similar way,
		 \[
		 \sup_\epsilon f (x) - \inf_\epsilon f (x) =   \sup_\epsilon f (x) - f(x ) + f(x )	-  \inf_\epsilon f (x) < \mu
		 \]
		 implying
		 \[
		 0 \leq \sup_\epsilon f (x) - f(x ) < \mu - ( f(x )	-  \inf_\epsilon f (x)) < \mu
		 \]		   
		 Therefore, we can select Cauchy sequences $\epsilon$ and $\mu < \epsilon N $ by the Archimedean property for some $N$. 
		 Therefore, \textit{f} is right continuous at \textit{x}\footnote {
		 This argument asserts the equalities 
		 $
		 \llim{\epsilon}{0}{  \inf_{[x, x+ \epsilon]} f(x) } = \llim{x_0}{x^+}{ f(x)} = \llim{\epsilon}{0}{  \sup_{[x, x+ \epsilon]} f(x) } ,
		 $
		 which is the equivalent to the \textit{Squeeze lemma}}. 
	 
	 \item[Reverse case]
		 If \textit{f} is (right-) continuous about \textit{x} then there exist related (Cauchy sequences) $\mu$ and $\delta$ such that		 
		 \begin{flalign*}
		 \left|  f(x^{\prime}) - f(x)\right| & < \mu/2,  \ \  \left| x^{\prime}  -  x \right|  < \delta/2 \\
		 \left|  f(x) - f(x^{\prime\prime}) \right|  & < \mu/2,  \ \ \left|   x -  x^{\prime\prime}    \right| < \delta/2
		 \end{flalign*}
		Then we add the inequalities and by the triangle inequality we have
	 \begin{flalign*}
		\left|  f(x^{\prime}) - f(x^{\prime\prime})\right| & \leq \left|  f(x^{\prime}) - f(x)\right|+ \left|  f(x) - f(x^{\prime\prime}) \right|  < \mu \\ 
		\left| x^{\prime}  -  x^{\prime\prime}  \right| & \leq \left| x^{\prime}  -  x \right| + \left|   x -  x^{\prime\prime}    \right| < \delta \epnt
	\end{flalign*}
		However, since $ x^{\prime}$ and $x^{\prime\prime}$ are arbitrary we can set the former to correspond to the minimum and the latter to the maximum of \textit{f} in the interval.
		therefore, by the least-upper-bond property we can map 
		  $ f(x^{\prime}) \mapsto \inf_\epsilon f (x) $, $f(x^{\prime\prime}) \mapsto \sup_\epsilon f (x) $.		
		Therefore, 
		$ \mathrm{osc_{\delta}^{+}} [f] (x) < \mu $  for   $ \left|  x^{\prime} - x^{\prime\prime}  \right| < \delta $ (for the same $\mu$ and $\delta$ ).
		Therefore, the limit is $\mathrm{osc^{+}} [f] (x)=0 $.
	\end{description}
    The left case follows by applying the right case, just proved, to the mirrored image of the function $ f(-x)$.
	 \end{proof}
	Therefore, we have the obvious corollary that if $f$ is both right- and left- continuous iff $\mathrm{osc^{+}} [f] (x)=\mathrm{osc^{-}} [f] (x) = \mathrm{osc} [f] (x)=0$, which was the actual statement in \cite{Trench2013}.
   	\section{Fractional (fractal) variations and fractional velocities}
   	\label{sec:frdiff}
 
  \begin{definition}
  	\label{def:fracvar}
  	Define \textit{Fractal Variation} operators of order $0 \leq \beta \leq 1$ as
  	\begin{align}
  	\label{eq:fracvar1}
  	\fracvarplus {f}{x}{\beta} := \frac{ \deltaplus{f}{x}  }{\epsilon ^\beta}  =\frac{ f(x+ \epsilon) - f(x) }{\epsilon ^\beta} 
  	\\
  	\fracvarmin {f}{x}{\beta} :=  \frac{ \deltamin{f}{x} }{\epsilon ^\beta}  =\frac{ f(x)- f( x- \epsilon)  }{\epsilon  ^\beta} 
  	\end{align}
  	for a positive  $\epsilon$.
  \end{definition}
   
       \begin{definition}[Fractional order velocity]
       	\label{def:frdiff}
       	Define  the \textsl{fractional velocity} of fractional order $\beta$ as the limit  
       	\begin{align}
       	\label{eq:fracdiffa}
       	\fdiffpm {f}{x}{\beta} &:= \llim{\epsilon}{0}{\frac{\Delta^{\pm}_{\epsilon} [f ] (x) }{\epsilon ^\beta}} 
       	=\llim{\epsilon}{0}{\fracvarpm {f}{x}{\beta}} \epnt       	   
       	\end{align}
       	A function for which at least one of \fdiffpm {f}{x}{\beta} exists finitely will be called $\beta$-differentiable at the point \textit{x}.
       \end{definition}
       In the above definition we do not require upfront equality of left and right $\beta$-velocities. 
       This amounts to not demanding upfront continuity of the   $\beta$-velocities (see. Prop. \ref{prop:cont1} ). 
     	
    	\begin{definition}
    	The set of points where the fractional velocity exists finitely and $\fdiffpm{f}{x}{\beta}  \neq 0 $ will be denoted as the \textbf{set of change}
  	   	$ 
  	   	\soc{\pm}{\beta} (f): = \left\lbrace x: \fdiffpm{f}{x}{\beta}  \neq 0\right\rbrace 
  	   	$.
    	\end{definition}  
      	
       We are ready to establish the existence conditions of the fractional velocity.
       To this end we will find it helpful to formulate the following two pivotal conditions:
        \begin{condition}[H\"older growth condition]
        For  given  $x$ and $0< \beta \leq 1$ 
           	\begin{equation}\label{C1} 
           	\mathrm{osc}_{\epsilon }^{\pm}f (x)  \leq C \epsilon^\beta \tag{C1}
           	\end{equation}
           	for some  $C \geq 0$ and $\epsilon > 0$.    
        \end{condition}
       \begin{condition}[H\"older oscillation condition]
        For  given  $x$ and $0< \beta \leq 1$  	
       	\begin{equation}\label{C2}
       	\mathrm{osc}^{\pm} \fracvarpm {f}{x}{\beta} =0 \epnt \tag{C2}
       	\end{equation}
       \end{condition}
 
       \begin{theorem}[Conditions for existence of $\beta$-velocity]\label{th:aexit}
       For each $\beta > 0$ if \fdiffplus {f}{x}{\beta} exists (finitely), then $f$ is right-Holder continuous of order $\beta$ at $x$, and the analogous result holds for \fdiffmin {f}{x}{\beta}and left-Holder continuity.
  
       Conversely, if conditions \ref{C1}  and \ref{C2}  hold then $\fdiffpm {f}{x}{\beta}$ exists.
       Moreover, the H\"older oscillation condition is a necessary and sufficient condition for the existence of $\beta$-velocity.
       The H\"older growth condition is a necessary condition for the existence of $\beta$-velocity.      
       \end{theorem}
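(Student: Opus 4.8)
The plan is to treat the four assertions as three logically distinct tasks: the forward implication (finiteness of a one-sided $\beta$-velocity forces one-sided H\"older continuity, hence \ref{C1}), the equivalence of \ref{C2} with existence of the limit, and the assembly of the converse. I would carry out the forward and oscillation arguments for the plus case only and obtain the minus case from the mirroring trick $x \mapsto -x$ already exploited at the end of Lemma \ref{th:osc}.

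First I would prove the forward implication. Assuming \fdiffplus{f}{x}{\beta} exists finitely with value $L$, the definition of the limit lets me write $f(x+\epsilon)-f(x) = L\,\epsilon^\beta + \bigoh{\epsilon^\beta}$. Taking absolute values and using that the remainder is eventually dominated by $\epsilon^\beta$ (by Definition \ref{def:bigoh}), I obtain $|f(x+\epsilon)-f(x)| \leq (|L|+1)\,\epsilon^\beta$ for all sufficiently small $\epsilon>0$, which is exactly right-H\"older continuity of order $\beta$ at $x$; the left case is identical. To upgrade this to the growth condition \ref{C1}, I would note that for any two points $u,v \in [x,x+\epsilon]$ the triangle inequality through $f(x)$ gives $|f(u)-f(v)| \leq 2(|L|+1)\,\epsilon^\beta$, whence $\oscplus{f}{x} \leq 2(|L|+1)\,\epsilon^\beta$. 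This establishes that \ref{C1} is a necessary consequence of existence.

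The core of the argument is the equivalence of \ref{C2} with existence of the velocity. The idea is to regard the fractal variation \fracvarpm{f}{x}{\beta} as a function of the auxiliary variable $\epsilon$ on an interval $(0,\epsilon_0]$ and to apply the mechanism of Lemma \ref{th:osc} in that variable. The growth condition \ref{C1} guarantees $|\fracvarplus{f}{x}{\beta}| \leq \oscplus{f}{x}/\epsilon^\beta \leq C$, so the variation is bounded near $\epsilon=0$ and its $\limsup$ and $\liminf$ there are finite. Condition \ref{C2} asserts precisely that the forward oscillation of this function of $\epsilon$ vanishes, i.e.\ $\limsup - \liminf = 0$ as $\epsilon \to 0^+$; by the Cauchy (null-sequence) criterion underlying Lemma \ref{th:osc} this forces the values along every null sequence of step sizes to be Cauchy, so \llim{\epsilon}{0}{\fracvarpm{f}{x}{\beta}} exists and the $\beta$-velocity is finite. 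Conversely, if the velocity equals $L$, then for every tolerance the variation is eventually within it of $L$, driving the oscillation over small intervals to zero and yielding \ref{C2}. This simultaneously proves the converse statement (under \ref{C1} and \ref{C2}) and the necessity and sufficiency of \ref{C2}.

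The main obstacle I anticipate is the careful transcription of Lemma \ref{th:osc}, stated as a result about one-sided continuity of a function at an interior point, into the present setting where the relevant variable is $\epsilon$ and the distinguished point is $\epsilon=0$, at which the variation is formally $0/0$. The clean way around this is to phrase everything through the Cauchy criterion rather than through continuity at a value: oscillation zero is equivalent to the variation being Cauchy along any null sequence of step sizes, which is exactly what the existence of the limit requires and which avoids assigning a value at $\epsilon=0$. Boundedness supplied by \ref{C1} is what makes this rephrasing legitimate, and that is precisely why the growth condition is listed alongside \ref{C2} in the converse even though \ref{C2} already encodes the limit.
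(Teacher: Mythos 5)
Your proposal is correct and follows essentially the same route as the paper: the forward implication via the limit definition and the triangle inequality, the equivalence of \ref{C2} with existence by applying the oscillation mechanism of Lemma \ref{th:osc} to the fractal variation regarded as a function of $\epsilon$, and the reflection $x \mapsto -x$ to dispatch the minus case. If anything, your write-up is more careful than the paper's in two spots: you explicitly upgrade the increment bound $\left| \Delta_{\epsilon}^{+}[f](x) \right| \leq C \epsilon^\beta$ to the oscillation form required by \ref{C1} (the paper silently conflates the two), and you run the oscillation argument through the Cauchy criterion, which sidesteps the facts that the variation is undefined at $\epsilon = 0$ and need not be continuous in $\epsilon$ (the paper invokes that continuity informally).
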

       \begin{proof}  
       	We will first prove  the case for right continuity.
      Condition C1 trivially implies the usual H\"older growth condition, given according to our notation as  
      $\fracvarpm {f}{x}{\beta}  \leq C \epsilon^\beta$.   
       	\begin{description}
       		\item[Forward case]
       	
       	Let $L>0$ be the limit value. Then by hypothesis we have
       	\[
       \left| 	\frac{\deltaplus{f}{x}}{\epsilon^\beta} - L \right|  < \mu
       	\]
       	for a positive  Cauchy sequence   $\mu$ implying existence of a Cauchy sequence $ \epsilon < \delta$. 
       	Straightforward rearrangement gives
       	\[
       	\left| 	f(x+ \epsilon) - f(x) - L \epsilon^\beta \right|  < \mu \epsilon^\beta \epnt
       	\]
       	Then by the reverse triangle inequality
   		\[
   		\left| 	f(x+ \epsilon) - f(x)   \right| - L \epsilon^\beta \leq
   		\left| 	f(x+ \epsilon) - f(x) - L \epsilon^\beta \right|  < \mu \, \epsilon^\beta \epnt
   		\]
   		Therefore, 
   		\[
   			\left| 	f(x+ \epsilon) - f(x)   \right| < \left( \mu + L  \right)  \epsilon^\beta \epnt
   		\]
   		Therefore, we can assign a Cauchy sequence to $\delta$ and demand that RHS approaches arbitrary close to 0 implying also $	\mathrm{osc^{+}} [f] (x) =0$.
   		In addition (i.e. by the least-upper-bound property) there is a number $C$, such that
   		\[
   		\left| 	f(x+ \epsilon) - f(x)   \right| \leq C \epsilon^\beta \ecma
   		\]
   		which is precisely the H\"older growth condition. The left continuity can be proven in the same way.
   		\item[Reverse case]
  		In order to prove the converse statement we can observe that the first hypothesis implies that 	
	\[
	\left| 	 \deltaplus{f}{x}    \right| \leq   \mathrm{osc}_{ \epsilon }^{+}f (x) 
	   \leq C \epsilon^\beta \ecma
	\]
	which in turn implies boundedness of the limit if it exists (i.e. a necessary condition).
	Next, we observe that 
	\[
	\liminf\limits_{\epsilon \rightarrow 0 } \frac{\deltaplus{f}{x}     }{\epsilon^\beta} 
	  = \limsup\limits_{\epsilon \rightarrow 0 } \frac{\deltaplus{f}{x}     }{\epsilon^\beta} \Leftrightarrow
	  \lim\limits_{\epsilon \rightarrow 0 } \frac{\deltaplus{f}{x}     }{\epsilon^\beta}
	\]
	for the RHS limit to exist.
	But since \fracvarplus{f}{x}{\beta}   is continuous in $\epsilon>0$ the equality of the limits by 
	Lemma \ref{th:osc} implies that 
		\[
			\lim\limits_{\epsilon \rightarrow 0 }\left(  \sup\limits_{\epsilon } -\inf\limits_{\epsilon }  \right) \frac{\deltaplus{f}{x} }{\epsilon^\beta} =
		\lim\limits_{\epsilon \rightarrow 0 } \mathrm{osc}^{+}_{\epsilon } \frac{\deltaplus{f}{x}     }{\epsilon^\beta} =0 \epnt
		\]
	Then under the adopted notation
			\[
			\lim\limits_{\epsilon \rightarrow 0 } \mathrm{osc}^{+}_{\epsilon } \frac{\deltaplus{f}{x}     }{\epsilon^\beta} =  \mathrm{osc}^{+} \fracvarplus {f}{x}{\beta} =0 \ecma
			\]
	which is  the condition C2, i.e. vanishing of fractional variation oscillation.
	
	In order to establish the sufficiency assertion we can observe that if the limit exists then the reasoning from the forward case  applies. 
	Indeed, in this case let's set $\fdiffplus {f}{x}{\beta} =a$.
	Then we have 
	\[
	\left| \frac{\Delta_\epsilon^{+} f (x) - a \epsilon^\beta }{\epsilon^\beta} \right| < \mu \epnt
	\]
	Then it follows that 
	$
	\left|  \Delta_\epsilon^{+} f (x) \right| < (\mu +a ) \epsilon^\beta  
	$	
	which is the usual H\"older growth condition. 
\end{description}
	
    The left case follows by applying the right case, just proved, to the function $g(x) = f(-x)$.
    \end{proof}
   
    This result is a generalization of Cherbit \cite{Cherbit1991}, where the first part of the Theorem \ref{th:aexit} above     is stated without proof. 
    The result further indicates towards splitting the (point-wise) Holder class of functions into two disjoint sub-classes -- a regular 
    \holder{r, \beta} class, where the H\"older oscillation condition is fulfilled and the function is $\beta$-differentiable, and an oscillatory 
    \holder{\star, \beta} class, where the  H\"older  growth condition is fulfilled but the oscillation condition fails and the function is not $\beta$-differentiable. 
    To demonstrate the latter, let us consider the following example:
    \begin{example}
	 The function \[
	 f(x):= \left\{
	 \begin{array}{ll}
	 0  ,& x=0  \\
	  \sqrt{x} \sin \left( \frac{1}{x}\right)   ,&  x > 0  
	 \end{array}
	 \right.
	 \]
	 has a strong oscillation at $x \rightarrow 0$. 
	 We will compute the fractional velocity at $x=0$ at the critical order $\beta=1/2$:
	 \[
	 \fracvarplus{f}{0}{1/2} = \sin \left( \frac{1}{\epsilon}\right)   
	 \]
	 It can be established that $\mathrm{osc}^{+} \fracvarplus{f}{0}{1/2} = 2 $. Therefore, the function fails the vanishing oscillation condition and its $1/2$-velocity is undefined at $x=0$.
	 On the other hand, for $\beta < 1/2$  we have
	  $
	  \fracvarplus{f}{0}{\beta} = \epsilon^{\frac{1}{2} - \beta } \sin \left( \frac{1}{\epsilon}\right)    
	  $.
	Therefore, 	$ \fdiffplus{f}{0}{\beta} = 0 	$.
    \end{example}

In addition, Th. \ref{th:aexit} demonstrates that the most useful interpretation of the H\"older exponents is as a point-wise property of a function. 
Therefore, we will interpret the statement $ f(x) \in \holder{\beta}$ as "the function belongs to the H\"older class at the point \textit{x}".
 
\begin{corollary}[Fractional approximation property]
	\label{prop:bigoh}
	If  \textit{f} is $\beta$-differentiable about $x$ in the interval $[x, x+ \epsilon]$ (resp. $[ x - \epsilon, x]$ ) 
	for all $\epsilon$ (uniformly)
	then
	\[
	\left| \frac{\Delta_\epsilon^{\pm} f (x) - \fdiffpm {f}{x}{\beta} \epsilon^\beta }{\epsilon^\beta} \right| = \bigoh{1} \leq \gamma
	\]
	for some $\gamma>0$.
\end{corollary}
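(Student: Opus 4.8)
The plan is to read this corollary as a direct restatement of the definition of the $\beta$-velocity, supplemented by the boundedness that the H\"older growth condition supplies. First I would set $a := \fdiffpm{f}{x}{\beta}$, which is finite by hypothesis, and simplify the bracketed quantity algebraically. Since the numerator is $\deltapm{f}{x} - a\,\epsilon^\beta$, dividing through by $\epsilon^\beta$ yields
\[
\frac{\deltapm{f}{x} - a\,\epsilon^\beta}{\epsilon^\beta} = \fracvarpm{f}{x}{\beta} - a \epnt
\]
so the whole problem reduces to controlling the single expression $\fracvarpm{f}{x}{\beta} - a$.

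The Cauchy-null part is then immediate. By Definition \ref{def:frdiff} we have $a = \llim{\epsilon}{0}{\fracvarpm{f}{x}{\beta}}$, so for every positive Cauchy sequence $\mu$ there is a $\delta$ such that $\epsilon < \delta$ forces $\left| \fracvarpm{f}{x}{\beta} - a \right| < \mu$. Hence $\fracvarpm{f}{x}{\beta} - a$ is a Cauchy-null sequence, which by Definition \ref{def:bigoh} is precisely $\bigoh{1}$. This disposes of the equality asserted in the statement.

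It then remains to secure the uniform bound $\leq \gamma$. Here I would invoke the necessity part of Theorem \ref{th:aexit}: existence of the $\beta$-velocity forces the H\"older growth condition \ref{C1}, so there is a constant $C \geq 0$ with $\left| \deltapm{f}{x} \right| \leq C \epsilon^\beta$ on the interval. Dividing by $\epsilon^\beta$ bounds $\left| \fracvarpm{f}{x}{\beta} \right| \leq C$, and the triangle inequality gives
\[
\left| \fracvarpm{f}{x}{\beta} - a \right| \leq C + |a| =: \gamma \epnt
\]

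The only genuine subtlety — the step I would watch most carefully — is the role of the uniformity hypothesis. The null-sequence conclusion follows merely from the limit existing at $x$, but the global bound $\gamma$ valid over the whole interval (rather than only near $\epsilon = 0$) is exactly what the phrase \emph{for all $\epsilon$ (uniformly)} is inserted to guarantee, namely that the H\"older constant $C$ furnished by Theorem \ref{th:aexit} may be taken uniformly as $\epsilon$ ranges over $[x, x+\epsilon]$ (resp. $[x-\epsilon, x]$). Everything else is routine bookkeeping.
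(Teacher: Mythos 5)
Your proposal is correct, and its skeleton matches the paper's: both proofs split the claim into (i) the displayed quantity being a Cauchy-null sequence, hence $\bigoh{1}$, and (ii) the existence of a finite bound $\gamma$, with the backward case handled by symmetry. Where you genuinely diverge is in step (ii). The paper simply invokes the least-upper-bound property to set $\gamma = \sup L$, which tacitly presupposes that $L$ is bounded over the whole range of $\epsilon$ — a fact it never spells out. You instead manufacture an explicit bound: the necessity half of Theorem \ref{th:aexit} yields the H\"older growth condition \ref{C1}, so $\left| \deltapm{f}{x} \right| \leq C \epsilon^\beta$, whence $\left| \fracvarpm{f}{x}{\beta} - a \right| \leq C + |a| =: \gamma$ by the triangle inequality. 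This buys you two things the paper's one-line argument leaves implicit: a concrete value of $\gamma$, and a visible justification of why the supremum is finite at all — which is precisely where the hypothesis \emph{for all $\epsilon$ (uniformly)} does its work, as you correctly flag in your closing paragraph. The paper's route is shorter but rests on the unstated fact that a convergent (null) sequence is bounded; yours is marginally longer but closes that gap. Both are valid proofs of the statement.
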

\begin{proof}
	We prove the forward case first.
	Consider the interval  $[x, x+ \epsilon]$.
	Then by Th. \ref{th:aexit} 
	\[
	L= \left| \frac{\Delta_\epsilon^{+} f (x) - \fdiffplus {f}{x}{\beta} \epsilon^\beta }{\epsilon^\beta} \right| 
	\]
	is a Cauchy sequence therefore, under the $\smallO$-notation  $L =  \bigoh{1}$.
	By the least-upper-bond property there is $\gamma = \sup{L}$, therefore, $L \leq \gamma$.
	The proof of the backward case follows by considering the  interval  $[ x - \epsilon, x]$ and applying the same argument to $\Delta_\epsilon^{-} f (x)$ and $ \fdiffmin {f}{x}{\beta} $, respectively.
\end{proof}

Further, using so-established existence result it is straightforward to demonstrate the fractional Taylor expansion property:

\begin{corollary}[Fractional Taylor-Lagrange property]
	\label{th:holcomp1}
	The existence of $\fdiffpm{f}{x}{\beta} \neq 0$  for  $\beta \leq 1$ implies that
	\begin{equation}
	f(x \pm \epsilon) = f(x) \pm \fdiffpm {f}{x}{\beta}   \epsilon^\beta + \bigoh{\epsilon^{\beta} }  \epnt
	\end{equation}
	While if  
	\[
	f(x \pm \epsilon)= f(x) \pm K \epsilon^\beta +\gamma \; \epsilon^\beta  
	\] 
	uniformly in   the interval $ x \in [x, x+ \epsilon]$ for some $\gamma$, such that \llim{\epsilon}{0}{ \gamma}=0 then \fdiffpm {f}{x}{\beta} = K.
\end{corollary}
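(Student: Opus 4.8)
The plan is to handle the two implications separately, proving each in the forward ($+$) case and then obtaining the backward ($-$) case by the reflection $g(x) = f(-x)$, exactly as in the proofs of Lemma \ref{th:osc} and Theorem \ref{th:aexit}.

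For the first implication, I would invoke the Fractional approximation property (Corollary \ref{prop:bigoh}): since $\fdiffplus{f}{x}{\beta}$ exists finitely, that corollary yields
\[
\left| \frac{\deltaplus{f}{x} - \fdiffplus{f}{x}{\beta}\,\epsilon^\beta}{\epsilon^\beta} \right| = \bigoh{1} \epnt
\]
Multiplying through by $\epsilon^\beta > 0$ gives $\deltaplus{f}{x} - \fdiffplus{f}{x}{\beta}\,\epsilon^\beta = \bigoh{1}\,\epsilon^\beta$. The single step deserving care is the identification $\bigoh{1}\,\epsilon^\beta = \bigoh{\epsilon^\beta}$: denoting the left-hand remainder by $R(\epsilon)$, we have $R(\epsilon)/\epsilon^\beta = \bigoh{1}$, a Cauchy-null sequence, so that $\llim{\epsilon}{0}{R(\epsilon)/\epsilon^\beta} = 0$, which by Definition \ref{def:bigoh} is exactly $R(\epsilon) = \bigoh{\epsilon^\beta}$. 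Substituting $\deltaplus{f}{x} = f(x+\epsilon) - f(x)$ then produces the claimed expansion $f(x+\epsilon) = f(x) + \fdiffplus{f}{x}{\beta}\,\epsilon^\beta + \bigoh{\epsilon^\beta}$.

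For the converse, the argument reduces to a direct computation with the fractional variation. Starting from the hypothesis $f(x+\epsilon) = f(x) + K\epsilon^\beta + \gamma\,\epsilon^\beta$, I would divide the increment by $\epsilon^\beta$ to obtain
\[
\fracvarplus{f}{x}{\beta} = \frac{f(x+\epsilon) - f(x)}{\epsilon^\beta} = K + \gamma \epnt
\]
Passing to the limit and using $\llim{\epsilon}{0}{\gamma} = 0$ gives $\fdiffplus{f}{x}{\beta} = \llim{\epsilon}{0}{(K+\gamma)} = K$ straight from Definition \ref{def:frdiff}.

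I do not anticipate a genuine obstacle here: both directions are essentially bookkeeping with the asymptotic $\smallO$-notation, once Corollary \ref{prop:bigoh} is available. The one place where the definitions must be applied rather than taken for granted is the passage $\bigoh{1}\,\epsilon^\beta = \bigoh{\epsilon^\beta}$; everything else, including the reduction of the backward statements to $g(x) = f(-x)$, is routine.
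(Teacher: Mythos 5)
Your proof is correct, and it splits from the paper's in an instructive way. In the forward direction the two arguments are essentially the same: the paper sets $\gamma = f(x+\epsilon)-f(x)-K\epsilon^\beta$, divides by $\epsilon^\beta$ to get $\fracvarplus{f}{x}{\beta} = K + \bigoh{1}$, and takes the limit; your multiplication of the Corollary \ref{prop:bigoh} estimate by $\epsilon^\beta$ is the same bookkeeping read in the opposite direction (pedantically, Corollary \ref{prop:bigoh} carries a uniformity hypothesis that the present statement does not grant, but its content at a single point is nothing more than Definition \ref{def:frdiff} -- existence of the finite limit makes the normalized remainder a null sequence -- so you could cite the definition and lose nothing). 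The genuine difference is in the converse: you divide the assumed expansion by $\epsilon^\beta$, obtain $\fracvarplus{f}{x}{\beta} = K + \gamma$, and pass to the limit directly from Definition \ref{def:frdiff}, whereas the paper instead checks that the expansion fulfills the H\"older growth condition \ref{C1} and the vanishing oscillation condition \ref{C2} and then invokes the existence machinery of Theorem \ref{th:aexit} before identifying the limit as $K$. Your route is more elementary and shows that the uniformity of the expansion over the interval is not actually needed to pin down the velocity at the single point $x$; the paper's route is the one place where that uniformity does work (it is what makes the interval-based oscillation condition \ref{C2} verifiable), and it has the expository merit of exhibiting the corollary as an instance of Theorem \ref{th:aexit}, but it is strictly heavier than necessary for the stated conclusion.
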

\begin{proof}
	\begin{description}
		\item[Forward statement]  
			Suppose that 
			\[
			f(x +\epsilon)= f(x) + K \epsilon^\beta +\gamma    \ecma
			\] 
			where $ K= \fdiffpm{f}{x}{\beta}$ and  $\gamma = \bigoh{\epsilon^{\beta} }  $.
			Then 
			$\fracvarplus{f}{x}{\beta} = K + \bigoh{1} $. Taking the limit provides the result.
			The backward case is proven in a similar manner.

		\item[Converse statement]
			Suppose that 
			\[
			f(x +\epsilon)= f(x) + K \epsilon^\beta +\gamma \; \epsilon^\beta  \ecma
			\] 
			uniformly in  the interval $ x \in [x, x+ \epsilon]$.
			Then this fulfills both H\"older growth and vanishing oscillation conditions. 
			Therefore, 	$ K = \fdiffpm{f}{x}{\beta}$ observing that \llim{\epsilon}{0}{ \gamma}=0.
	\end{description}
\end{proof}
This result implies that regular H\"older functions can be approximated locally as fractional powers of appropriate order.

  \begin{lemma}[Bounds of forward variation]
  	\label{th:bondvar1}
  	Let   $\fdiffplus {f}{x}{\beta} \neq0$ and
  	$C_x$ and $C^{\prime}_{x}$ be constants such that 
  	i)  $C$ is the smallest number for which $|\Delta_{\epsilon}^{+}[f] (x)| \leq C \epsilon^\beta$ still holds, 	that is
  	$C_x =  \underset{C}{\inf} \{  |\Delta_{\epsilon}^{+}[f] (x)| \leq C \epsilon^\beta\}$ 	and 
  	ii) $C^{\prime}_x$ is the largest number $C$ for which  $ |\Delta_{\epsilon}^{+}[f] (x)| \geq C \epsilon^\beta$
  	still holds, that is 
  	$C^{\prime}_{x} =  \underset{C}{\sup} \{  |\Delta_{\epsilon}^{+}[f] (x) | \geq C \epsilon^\beta\}$.
  	Then
  	\[
  	|\fdiffplus {f}{x}{\beta} | = C_x =C^{\prime}_{x} \epnt
  	\]
  \end{lemma}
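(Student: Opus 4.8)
The plan is to reduce everything to the elementary behaviour of the single scalar sequence $\phi(\epsilon) := \left| \fracvarplus{f}{x}{\beta} \right| = \left| \deltaplus{f}{x} \right| / \epsilon^\beta$ as $\epsilon \to 0$. Writing $L := \fdiffplus {f}{x}{\beta}$ and $a := |L|$, the hypothesis $\fdiffplus {f}{x}{\beta} \neq 0$ guarantees $a > 0$, and by Definition~\ref{def:frdiff} together with continuity of the absolute value we have $\llim{\epsilon}{0}{\phi(\epsilon)} = a$. The two constants can then be re-read purely in terms of $\phi$: the bound $\left| \deltaplus{f}{x} \right| \leq C \epsilon^\beta$ is the same as $\phi(\epsilon) \leq C$, so that $C_x = \inf\{C : \phi(\epsilon) \leq C \text{ for all small } \epsilon\}$, and likewise $C^{\prime}_x = \sup\{C : \phi(\epsilon) \geq C \text{ for all small } \epsilon\}$. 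I would make explicit that both inequalities are meant to hold on a shrinking right neighbourhood of $0$, so that $C_x$ and $C^{\prime}_x$ are nothing but the upper and lower limits of $\phi$.

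Next, I would extract a two-sided estimate from the limit. Fixing a positive Cauchy sequence $\mu$, there is $\delta$ so that $|\phi(\epsilon) - a| < \mu$, that is
\[
(a - \mu)\, \epsilon^\beta < \left| \deltaplus{f}{x} \right| < (a + \mu)\, \epsilon^\beta
\]
for every $\epsilon < \delta$. For $C_x$: the right inequality shows that $C = a + \mu$ is an admissible upper-bound constant, whence $C_x \leq a + \mu$ and, $\mu$ being arbitrary, $C_x \leq a$; conversely, taking any $C < a$ and $\mu = (a - C)/2$, the left inequality forces $\left| \deltaplus{f}{x} \right| > \tfrac{a+C}{2}\, \epsilon^\beta > C\, \epsilon^\beta$ for all small $\epsilon$, so that $C$ cannot bound $\left| \deltaplus{f}{x} \right|$ from above and $C_x \geq a$. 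Hence $C_x = a$.

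The argument for $C^{\prime}_x$ is the mirror image. The left inequality shows $C = a - \mu$ is an admissible lower-bound constant, giving $C^{\prime}_x \geq a - \mu$ and thus $C^{\prime}_x \geq a$; and for any $C > a$, choosing $\mu = (C - a)/2$, the right inequality yields $\left| \deltaplus{f}{x} \right| < \tfrac{a+C}{2}\, \epsilon^\beta < C\, \epsilon^\beta$ for all small $\epsilon$, so $C$ fails as a lower bound and $C^{\prime}_x \leq a$. Combining the two estimates gives $\left| \fdiffplus {f}{x}{\beta} \right| = a = C_x = C^{\prime}_x$, as claimed.

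I expect no serious obstacle; the only point needing care is the precise reading of the defining sets for $C_x$ and $C^{\prime}_x$, namely that the inequalities are required only for arbitrarily small $\epsilon$ (equivalently, that the two constants are the upper and lower limits of $\phi$), since otherwise the supremum or infimum of $\phi$ over a fixed interval need not coincide with $a$. Once this is fixed, the nonvanishing hypothesis serves only to make $a > 0$ and the statement nontrivial.
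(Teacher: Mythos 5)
Your proof is correct, and while it shares the paper's underlying strategy---playing the optimal constants off against the value of the limit---its execution differs in ways worth recording. The paper sets $q = \fdiffplus{f}{x}{\beta}$, inserts the defining bounds into the two-sided inequality $\left( C^{\prime}_{x} - q \right)\epsilon^\beta \leq |\deltaplus{f}{x}| - q\,\epsilon^\beta \leq \left( C_x - q\right)\epsilon^\beta$, divides by $\epsilon^\beta$, and passes to the limit after splitting into cases according to the sign of $\deltaplus{f}{x}$; the final identification $C_x = |q|$ rests on an informal appeal to ``$C_x$ is an infimum'' (indeed the paper writes $\llim{\epsilon}{0}{C_x}$, treating a constant as if it varied with $\epsilon$). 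You instead reduce everything to the scalar quantity $\phi(\epsilon) = |\deltaplus{f}{x}|/\epsilon^\beta$, observe $\phi(\epsilon) \to a := |\fdiffplus{f}{x}{\beta}|$ by continuity of the absolute value---which eliminates the sign case analysis entirely---and then prove each identity by a genuine two-sided argument: $a + \mu$ is an admissible upper constant so $C_x \le a$, while any $C < a$ is violated for all small $\epsilon$ so $C_x \ge a$, and dually for $C^{\prime}_x$. This makes the inf/sup step rigorous rather than asserted. You also pin down a point the paper leaves implicit and which is necessary for the lemma to be literally true: the defining inequalities must be read asymptotically (required only for all sufficiently small $\epsilon$), so that $C_x$ and $C^{\prime}_x$ are the upper and lower limits of $\phi$; on a fixed interval $(0, E]$ the claim fails, e.g.\ $f(x) = x^\beta + x$ at $x = 0$ with $\beta < 1$ gives $\phi(\epsilon) = 1 + \epsilon^{1-\beta}$, whose smallest uniform upper constant is $1 + E^{1-\beta} > 1 = a$. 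In short, your route buys rigor and brevity at no cost; the paper's route adds only an unnecessary sign distinction and a looser optimality step.
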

  \begin{proof}
  	We fix $x$ so that $C_x$ remains constant. Let $ \fdiffplus {f}{x}{\beta} =q$ for a certain real number \textbf{q}.
  	From the definition of a H\"{o}lder function it follows that $ |\Delta_{\epsilon}^{+}[f] (x) | \leq C_x \epsilon^\beta$ then we subtract $ q \epsilon^\beta $ from both sides of the inequality to obtain
  	\[
  	\left( C^{\prime}_{x} -q \right) \epsilon^\beta \leq	|\Delta_{\epsilon}^{+}[f] (x) | -q \epsilon^\beta \leq \left( C_x -q \right) \epsilon^\beta \epnt
  	\]
  	Further, division by the positive quantity $  \epsilon^\beta$ results in 
  	\[
  	C^{\prime}_{x} -q  \leq \frac{|\Delta_{\epsilon}^{+}[f] (x) | -q \epsilon^\beta}{\epsilon^\beta} \leq   C_x -q  \epnt
  	\]
    We now consider two cases.
  	If $\Delta_{\epsilon}^{+}[f] (x) \geq 0 $  then taking the limit of both sides gives
  	\[
  	\llim{\epsilon}{0}{ \frac{\Delta_{\epsilon}^{+}[f] (x)  -q \epsilon^\beta}{\epsilon^\beta} } =0
  	\] 
  	and
  	$ \llim{\epsilon}{0}{ C_x -q} \geq 0$ but since $C_x$ is an infimum then $ \llim{\epsilon}{0}{ C_x  } = q$.
  	Since $C$ is constant with regard to $\epsilon$ it follows that $C_x =q$.
  	On the other hand, if $\Delta_{\epsilon}^{+}[f] (x) \leq 0 $ then
  	\[
  	\llim{\epsilon}{0}{ \frac{ - \Delta_{\epsilon}^{+}[f] (x)  -q \epsilon^\beta}{\epsilon^\beta} } = - 2 q
  	\] and
  	$ \llim{\epsilon}{0}{ C_x -q} \geq -2 q$, therefore $ \llim{\epsilon}{0}{ C_x } \geq -q$ and the same reasoning as in the previous case applies to yield $C_x= - q$.
  	Therefore, finally $C_x =|q|$.
  	The case for $C^{\prime}_{x}$ can be derived using identical reasoning. 
  \end{proof}
  \begin{lemma}[Bounds of backward variation]
  	\label{th:bondvar2}
  	Let $\fdiffmin {f}{x}{\beta} \neq0$ and
  	$C_x$ and $C^{\prime}_{x}$ be constants such that 
  	i)  $C$ is the smallest number $C$ for which $|\Delta_{\epsilon}^{-}[f] (x)| \leq C \epsilon^\beta$ still holds, 	that is
  	$C_x =  \underset{C}{\inf} \{  |\Delta_{\epsilon}^{-}[f] (x)| \leq C \epsilon^\beta\}$ 	and 
  	ii) $C^{\prime}_x$ is the largest number $C$ for which  $ |\Delta_{\epsilon}^{-}[f] (x)| \geq C \epsilon^\beta$
  	still holds, that is 
  	$C^{\prime}_{x} =  \underset{C}{\sup} \{  |\Delta_{\epsilon}^{-}[f] (x) | \geq C \epsilon^\beta\}$.
  	Then
  	$
  	|\fdiffmin {f}{x}{\beta} | = C_x =C^{\prime}_{x} 
  	$.
  \end{lemma}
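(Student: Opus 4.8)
The plan is to deduce this lemma directly from Lemma~\ref{th:bondvar1} by the reflection device already used to dispatch the left-hand cases in Lemma~\ref{th:osc} and Theorem~\ref{th:aexit}. First I would introduce the mirrored function $g(x) := f(-x)$ and record the elementary identity
\[
\deltaplus{g}{-x} = g(-x+\epsilon) - g(-x) = f(x-\epsilon) - f(x) = -\,\deltamin{f}{x} \ecma
\]
so that $\left| \deltaplus{g}{-x} \right| = \left| \deltamin{f}{x} \right|$ for every $\epsilon > 0$. Dividing by $\epsilon^\beta$ and passing to the limit then gives $\fdiffplus{g}{-x}{\beta} = -\,\fdiffmin{f}{x}{\beta}$; in particular the hypothesis $\fdiffmin{f}{x}{\beta} \neq 0$ is carried over to $\fdiffplus{g}{-x}{\beta} \neq 0$, which is exactly what Lemma~\ref{th:bondvar1} requires.

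Next I would note that the two constants attached to the backward variation of $f$ at $x$ coincide with the constants attached to the forward variation of $g$ at $-x$. Their defining relations, namely $\left| \deltamin{f}{x} \right| \leq C \epsilon^\beta$ for $C_x$ and $\left| \deltamin{f}{x} \right| \geq C \epsilon^\beta$ for $C^{\prime}_{x}$, involve only the absolute value of the difference, and that absolute value is left unchanged by the reflection in view of the identity above. Hence the infimum defining $C_x$ and the supremum defining $C^{\prime}_{x}$ range over literally the same admissible sets of $C$ in both settings.

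With these two remarks in place, applying Lemma~\ref{th:bondvar1} to $g$ at the point $-x$ yields $\left| \fdiffplus{g}{-x}{\beta} \right| = C_x = C^{\prime}_{x}$. Since $\left| \fdiffplus{g}{-x}{\beta} \right| = \left| \fdiffmin{f}{x}{\beta} \right|$, the desired chain $\left| \fdiffmin{f}{x}{\beta} \right| = C_x = C^{\prime}_{x}$ follows at once.

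I do not anticipate a genuine obstacle: the reflection reduces the claim to the already-proved forward lemma, so the only point demanding care is the bookkeeping of the mirror map---checking that the interval $[x-\epsilon, x]$ is sent to $[-x, -x+\epsilon]$, that the sign flip in $\deltaplus{g}{-x} = -\,\deltamin{f}{x}$ is harmless precisely because both constants are defined through absolute values, and that the nonvanishing hypothesis survives the reflection. Should a self-contained argument be preferred, the alternative is to transcribe the proof of Lemma~\ref{th:bondvar1} verbatim while splitting into the cases $\deltamin{f}{x} \geq 0$ and $\deltamin{f}{x} \leq 0$; this simply repeats the same infimum-and-limit computation with the backward difference substituted for the forward one.
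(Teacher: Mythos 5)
Your proposal is correct: the reflection $g(x) = f(-x)$ cleanly transfers the nonvanishing hypothesis, the difference quotients, and both extremal constants from the backward setting at $x$ to the forward setting at $-x$ (since all defining relations involve only $\left| \deltamin{f}{x} \right| = \left| \deltaplus{g}{-x} \right|$), so Lemma~\ref{th:bondvar1} applies verbatim. The paper itself gives no proof of Lemma~\ref{th:bondvar2}, relying on exactly this symmetry with the forward case---the same mirroring device it invokes for Lemma~\ref{th:osc} and Theorem~\ref{th:aexit}---so your argument supplies precisely the intended reasoning.
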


\section{Continuity of fractional velocity}
\label{sec:cont}
 
Gleyzal \cite{Gleyzal1941} established that a function is Baire-one if and only if it is the limit of an interval function.
Therefore, \fdiffpm {f}{x}{\beta} are Baire class 1 from which it follows that \fdiffpm {f}{x}{\beta} must be continuous on a dense set. 
Moreover, since the continuity set of a function is a $G_{\delta}$ set (i.e. an intersection of at most countably many open sets), 
it follows from the Osgood-Baire Category theorem that the set of points of discontinuity of
\fdiffpm {f}{x}{\beta} is meager (i.e. a union of at most countably many nowhere dense sets).
Further, we can give a more precise result about the set of discontinuity of \fdiffpm {f}{x}{\beta}.

         \begin{proposition}
         	\label{prop:cont1}
         	Suppose that  $\fdiffpm {f}{x}{\beta}$ exist finitely and are continuous. 
         	Then $\fdiffplus {f}{x}{\beta} = \fdiffmin {f}{x}{\beta}$.
         	The converse is not always true.
         \end{proposition}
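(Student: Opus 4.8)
The plan is to exploit the elementary but decisive observation that a single difference quotient plays two roles at once: it is a forward variation based at $x$ and, simultaneously, a backward variation based at the shifted point $x+\epsilon$. Since $\deltaplus{f}{x} = f(x+\epsilon) - f(x) = \deltamin{f}{x+\epsilon}$, dividing by $\epsilon^\beta$ gives the identity $\fracvarplus{f}{x}{\beta} = \fracvarmin{f}{x+\epsilon}{\beta}$, valid for every $\epsilon > 0$. The existence hypothesis forces the left-hand side to converge to $\fdiffplus{f}{x}{\beta}$ as $\epsilon \to 0$, so the right-hand side converges to the same value along this diagonal. The whole argument then reduces to identifying that diagonal limit with the backward velocity $\fdiffmin{f}{x}{\beta}$.

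To carry out that identification I would split the shifted variation as $\fracvarmin{f}{x+\epsilon}{\beta} = \fdiffmin{f}{x+\epsilon}{\beta} + R(\epsilon)$, where $R(\epsilon)$ is the approximation remainder at base point $x+\epsilon$ with increment $\epsilon$. By the fractional approximation property (Cor.~\ref{prop:bigoh}), whenever $f$ is $\beta$-differentiable on a neighborhood of $x$ the remainder is a $\bigoh{1}$ quantity, so $R(\epsilon) \to 0$. The continuity hypothesis then disposes of the main term: since $x+\epsilon \to x$, continuity of the backward velocity yields $\fdiffmin{f}{x+\epsilon}{\beta} \to \fdiffmin{f}{x}{\beta}$. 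Passing to the limit in the split identity and matching it against the diagonal limit of the left-hand side delivers $\fdiffplus{f}{x}{\beta} = \fdiffmin{f}{x}{\beta}$.

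The step I expect to be the main obstacle is the vanishing of $R(\epsilon)$, precisely because the base point $x+\epsilon$ and the increment $\epsilon$ shrink together. The pointwise fact that the variation converges to the velocity at each fixed base point says nothing about this coupled, diagonal limit, so uniformity is indispensable. I would therefore invoke Cor.~\ref{prop:bigoh} in its uniform form, using the continuity assumption to rule out the remainder's modulus degenerating as the base point slides toward $x$; this is the delicate point on which the proof truly hinges, and it is where I would spend the care rather than on the routine rearrangements.

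Finally, to show the converse fails I would exhibit a function whose two one-sided velocities coincide everywhere yet whose velocity is discontinuous. A convenient candidate is $f(x) = \mathrm{sgn}(x)\,|x|^\beta$: at the origin a direct computation gives $\fdiffplus{f}{0}{\beta} = \fdiffmin{f}{0}{\beta} = 1$, while at every $x \neq 0$ the function is ordinarily differentiable, so for $\beta < 1$ both one-sided velocities vanish. Hence $\fdiffpm{f}{x}{\beta}$ agree at every point but jump at $0$, demonstrating that equality of the one-sided velocities does not entail their continuity.
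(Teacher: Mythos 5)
Your argument is correct at the same level of rigor as the paper's own, but it takes a genuinely more direct route. The paper first forms the symmetric quotient $D^\beta f(x)=\lim_{\varepsilon\rightarrow 0}\bigl(f(x+\varepsilon)-f(x-\varepsilon)\bigr)/(2\varepsilon)^\beta$, which exists under the hypothesis because it is the limit of $2^{-\beta}\bigl(\fracvarplus{f}{x}{\beta}+\fracvarmin{f}{x}{\beta}\bigr)$, and then re-bases this single quantity twice --- as a backward difference at the moving point $x+\varepsilon$ and as a forward difference at $x-\varepsilon$ --- concluding by continuity that both one-sided velocities equal $D^\beta f(x)$. You dispense with the symmetric intermediary altogether: the identity $\deltaplus{f}{x}=\deltamin{f}{x+\epsilon}$ re-bases one one-sided quotient directly, so you need the existence of $\fdiffplus{f}{x}{\beta}$ plus continuity of the backward velocity only, whereas the paper's route invokes continuity of both. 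What each approach buys: the paper's symmetric quotient makes the symmetry of the conclusion manifest and treats the two sides in one stroke; yours is shorter, uses marginally weaker hypotheses, and isolates cleanly the one genuinely delicate step. Your counterexample is literally the paper's: $\mathrm{sgn}(x)\,|x|^\beta$ is the function $g$ the paper defines piecewise, with the same computation of its one-sided velocities.

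On that delicate step, one caution. The diagonal limit $\lim_{\epsilon\rightarrow 0}\fracvarmin{f}{x+\epsilon}{\beta}$, in which the base point and the increment shrink together, is not controlled by the pointwise convergence of $\fracvarmin{f}{y}{\beta}$ to $\fdiffmin{f}{y}{\beta}$ at each fixed $y$; you are right that some uniformity is indispensable. But Cor.~\ref{prop:bigoh} as stated cannot supply it: it is a fixed-base-point statement (its ``uniformly'' qualifies the range of $\epsilon$ at that single point), so it gives no uniform modulus for your remainder $R(\epsilon)$ as the base point slides toward $x$, and continuity of $\fdiffmin{f}{\cdot}{\beta}$ alone does not upgrade pointwise convergence of the quotients to the required diagonal convergence. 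Strictly speaking, then, the step $R(\epsilon)\rightarrow 0$ remains an assumption. You should know, however, that the paper's proof commits exactly the same silent leap when it identifies $\lim_{\varepsilon\rightarrow 0}\bigl(f(y)-f(y-2\varepsilon)\bigr)/(2\varepsilon)^\beta$ with $\fdiffmin{f}{y}{\beta}$ for $y$ depending on $\varepsilon$; your proof merely makes explicit the uniformity hypothesis the paper leaves implicit. (For $\beta=1$ the mean value theorem closes this gap; for $\beta<1$ one could instead appeal to Th.~\ref{th:fdiffcont}, by which continuity forces both velocities to vanish and the equality becomes trivial, though that theorem appears later and its proof uses the same re-basing device.)
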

         \begin{proof}   	
         	
         	Suppose that  $\fdiffpm {f}{x}{\beta}$ exist finitely and are continuous.
         	Then the double-sided limit
         	\[
         	D^\beta f (x)  = \lim_{\varepsilon\rightarrow 0}\frac{f(x+\varepsilon)-f(x-\varepsilon)}{\left(  2 \, \varepsilon\right) ^\beta} = 
         	\lim_{\varepsilon\rightarrow 0} \frac{\fracvarplus{f}{x}{\beta} +\fracvarmin{f}{x}{\beta}}{2^\beta}
         	\]
         	exists.  
         	Set  $y=x -\varepsilon$ and the same calculation gives
         	\begin{flalign*}
         	D^\beta f (x) & = \lim_{\varepsilon\rightarrow 0}\frac{f(x+\varepsilon)-f(x-\varepsilon)}{\left(  2 \, \varepsilon\right) ^\beta} 
         	= \lim_{\varepsilon\rightarrow 0}\frac{f( y) - f(y - 2 \varepsilon)}{\left(  2 \, \varepsilon\right) ^\beta} \\
         	& = \fdiffmin{f}{y}{\beta}= \lim_{\varepsilon\rightarrow 0}  \fdiffmin{f}{x - \varepsilon}{\beta}=  \fdiffmin{f}{x }{\beta}
         	\end{flalign*}
         	In the last step of the above argument we use the hypothesis that  \fdiffmin{f}{x}{\beta} is continuous about $x_{-}$.
         	Further, setting  $z=x +\varepsilon$ gives 
         	\begin{flalign*}
         	D^\beta f (x) & = \lim_{\varepsilon\rightarrow 0}\frac{f(x+\varepsilon)-f(x-\varepsilon)}{\left(  2 \, \varepsilon\right) ^\beta} 
         	= \lim_{\varepsilon\rightarrow 0}\frac{f( z+2  \varepsilon) - f(z)}{\left(  2 \, \varepsilon\right) ^\beta} \\
         	& =  \fdiffplus{f}{z}{\beta}= \lim_{\varepsilon\rightarrow 0}  \fdiffmin{f}{x +\varepsilon}{\beta} = \fdiffplus{f}{x}{\beta}
         	\end{flalign*}
         	using the continuity of $\fdiffplus {f}{x}{\beta}$.
         	Therefore, $ \fdiffplus{f}{x}{\beta}= \fdiffmin{f}{x}{\beta}$.
         	
         	Finally, consider the function
         	\[
         	g(x):=\begin{cases}
         	x^\beta, & x \geq 0 \\
         	-|x|^\beta, & x <0
         	\end{cases}
         	\] for $0 < \beta < 1$.
         	Direct computations show that 
         	\[
         	\fdiffplus{g}{x}{\beta}:=\begin{cases}
         	1, & x = 0 \\
         	0, & x \neq 0
         	\end{cases}
         	\]
         	and
         	\[
         	\fdiffmin{g}{x}{\beta}:=\begin{cases}
         	1, & x = 0 \\
         	0, & x \neq 0
         	\end{cases}
         	\]  
         	which are discontinuous about $x=0$ but $ \fdiffplus{g}{0}{\beta}=\fdiffmin{g}{0}{\beta}$.
         \end{proof}
         
\begin{proposition}\label{th:diffvar2}
 	Let $f(x) \in \holder{r, \beta}$ then for all $0<\alpha < \beta \leq 1$ $ \fdiffplus{f}{x}{\alpha} =  \fdiffmin{f}{x}{\alpha}   = 0	$.
\end{proposition}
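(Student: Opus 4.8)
The plan is to reduce everything to the H\"older growth estimate supplied by Theorem \ref{th:aexit} and then to exploit the positive gap $\beta - \alpha > 0$. Since $f \in \holder{r, \beta}$ at $x$, the function is $\beta$-differentiable there, so both $\fdiffpm{f}{x}{\beta}$ exist finitely. By the direct (first) part of Theorem \ref{th:aexit}, the existence of $\fdiffplus{f}{x}{\beta}$ forces right-H\"older continuity of order $\beta$, that is $|\deltaplus{f}{x}| \leq C \epsilon^\beta$ for some $C \geq 0$ and all sufficiently small $\epsilon > 0$; the analogous bound $|\deltamin{f}{x}| \leq C \epsilon^\beta$ holds on the left.

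First I would form the $\alpha$-variation and bound it directly. Using the growth estimate,
\[
\left| \fracvarplus{f}{x}{\alpha} \right| = \frac{|\deltaplus{f}{x}|}{\epsilon^\alpha} \leq C\, \epsilon^{\beta - \alpha} \epnt
\]
Because $\beta - \alpha > 0$, the right-hand side is $\bigoh{1}$ and tends to $0$ as $\epsilon \to 0$. Hence $\fdiffplus{f}{x}{\alpha} = \llim{\epsilon}{0}{\fracvarplus{f}{x}{\alpha}} = 0$. The backward statement follows verbatim from the left-hand growth bound, or by applying the forward argument to $g(x) = f(-x)$ exactly as in the proofs of the Oscillation lemma and Theorem \ref{th:aexit}.

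An equivalent and slightly cleaner route is to invoke the Fractional Taylor--Lagrange property (Corollary \ref{th:holcomp1}): $\beta$-differentiability gives $f(x + \epsilon) = f(x) + \fdiffplus{f}{x}{\beta}\, \epsilon^\beta + \bigoh{\epsilon^\beta}$, so that $\fracvarplus{f}{x}{\alpha} = \fdiffplus{f}{x}{\beta}\, \epsilon^{\beta - \alpha} + \bigoh{\epsilon^\beta} / \epsilon^\alpha$, and both summands vanish in the limit since $\beta - \alpha > 0$ and $\bigoh{\epsilon^\beta}/\epsilon^\beta \to 0$ by Definition \ref{def:bigoh}.

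There is no serious obstacle here; the only point requiring care is bookkeeping with the membership $\holder{r, \beta}$. I must make sure this class is being used only through the properties by which it was defined --- namely that condition \ref{C2} holds and the $\beta$-velocity exists --- so that Theorem \ref{th:aexit} is legitimately applicable and the growth bound is available. Everything else is the elementary observation that dividing a quantity of size $\epsilon^\beta$ by $\epsilon^\alpha$ with $\alpha < \beta$ produces something that collapses to zero.
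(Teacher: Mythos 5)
Your proof is correct. Note that the paper states Proposition \ref{th:diffvar2} with no proof whatsoever, so there is nothing to compare against: your argument supplies the missing justification, and it is the natural one. Membership in $\holder{r,\beta}$ gives $\beta$-differentiability, hence (by Theorem \ref{th:aexit}, or even more directly from condition \ref{C1} together with $|\deltapm{f}{x}| \leq \mathrm{osc}_{\epsilon}^{\pm}f(x)$) the growth bound $|\deltapm{f}{x}| \leq C\epsilon^{\beta}$, and dividing by $\epsilon^{\alpha}$ with $\beta-\alpha>0$ squeezes the $\alpha$-variation to zero. One small caution on your ``cleaner'' alternative route: Corollary \ref{th:holcomp1} is stated under the hypothesis $\fdiffpm{f}{x}{\beta} \neq 0$, so when the $\beta$-velocity vanishes you cannot literally cite it; in that case $\deltapm{f}{x} = \bigoh{\epsilon^{\beta}}$ follows directly from Definition \ref{def:frdiff}, and the same computation goes through. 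The first route is the one to keep, as it needs no case split.
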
   
   
  \begin{theorem}[Discontinuous velocity]\label{th:discontd}
  	Let $f(x) \in \holder{r, \beta}$ for $\beta <1$. 
  	If $\fdiffpm {f}{x}{\beta} \neq 0$ then $\fdiffpm {f}{x}{\beta}$ is discontinuous about $x$ and \soc{+}{\beta} is totally disconnected.
  \end{theorem}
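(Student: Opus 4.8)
The plan is to prove the one-sided discontinuity statement by contradiction and then obtain total disconnectedness of the set of change as a formal consequence of the Baire-one character of the velocity recorded just before the theorem. I will carry out the forward case; the backward case follows by applying it to $g(x)=f(-x)$, exactly as in the earlier proofs. So suppose $\fdiffplus{f}{x}{\beta}=c\neq 0$ and assume, for contradiction, that $\fdiffplus{f}{x}{\beta}$ is continuous at $x$.

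First I would localise. Continuity of $\fdiffplus{f}{x}{\beta}$ at $x$ gives a small $r>0$ such that on $[x,x+r]$ the velocity $\fdiffplus{f}{\xi}{\beta}$ exists finitely, keeps the sign of $c$, and satisfies $|\fdiffplus{f}{\xi}{\beta}|>|c|/2$. On this interval $f$ is therefore $\beta$-differentiable, and I would invoke the Fractional approximation property (Corollary \ref{prop:bigoh}): writing $R(\xi,h):=f(\xi+h)-f(\xi)-\fdiffplus{f}{\xi}{\beta}\,h^{\beta}$, one obtains a uniform remainder bound $|R(\xi,h)|\le \eta(h)\,h^{\beta}$ with $\eta(h)\to 0$ as $h\to 0$, uniformly in $\xi\in[x,x+r]$.

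Then I would telescope over a uniform partition $x_k=x+kr/n$, $h=r/n$, and write the identity
\[
f(x+r)-f(x)=\sum_{k=0}^{n-1}\bigl(f(x_{k+1})-f(x_k)\bigr)=h^{\beta}\sum_{k=0}^{n-1}\fdiffplus{f}{x_k}{\beta}+\sum_{k=0}^{n-1}R(x_k,h)\epnt
\]
Dividing by $n^{1-\beta}$, the leading term equals $r^{\beta}\cdot\frac1n\sum_{k}\fdiffplus{f}{x_k}{\beta}$, which stays bounded below by $r^{\beta}|c|/2>0$, while the remainder is bounded by $\eta(h)\,r^{\beta}$ and hence tends to $0$; the left-hand side $\bigl(f(x+r)-f(x)\bigr)n^{\beta-1}$ tends to $0$ because $\beta<1$. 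Letting $n\to\infty$ yields $0\ge r^{\beta}|c|/2>0$, a contradiction. Hence $c=0$, i.e. a nonzero forward velocity cannot be a point of continuity, which is exactly the asserted discontinuity. For total disconnectedness, note that $\fdiffplus{f}{x}{\beta}$ is of Baire class one, so its continuity points are dense in every interval; if $\soc{+}{\beta}(f)$ contained a nondegenerate interval $J$ it would contain a continuity point $x_1\in J$ with $\fdiffplus{f}{x_1}{\beta}\neq 0$, contradicting the discontinuity just proved. Thus $\soc{+}{\beta}(f)$ contains no interval and is totally disconnected.

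The main obstacle is the uniformity of the remainder estimate. The bare pointwise Fractional Taylor--Lagrange expansion (Corollary \ref{th:holcomp1}) only gives $R(x_k,h)=\bigoh{h^{\beta}}$ at each fixed base point, and a merely bounded normalized remainder would contribute a sum of the \emph{same} order $n^{1-\beta}$ as the leading term, so the comparison would collapse. The decisive point is therefore to upgrade continuity of the velocity on $[x,x+r]$ to the uniform decay $\eta(h)\to 0$; this is precisely the content of the uniform hypothesis in Corollary \ref{prop:bigoh}, and verifying that continuity of $\fdiffplus{f}{x}{\beta}$ secures that hypothesis on a sufficiently short interval is the step that requires genuine care.
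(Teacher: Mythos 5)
Your overall strategy --- deriving a contradiction from the mismatch between additivity of increments and the $\epsilon^\beta$ normalization --- is the same scaling idea that drives the paper's proof, and your Baire-class-one argument for total disconnectedness is sound (and arguably cleaner than the paper's closing remark). However, there is a genuine gap at exactly the point you flag: the uniform remainder estimate $|R(\xi,h)|\le\eta(h)\,h^\beta$ with $\eta(h)\to 0$ uniformly in $\xi\in[x,x+r]$ is never established, and it does \emph{not} follow from Corollary \ref{prop:bigoh}. That corollary is a statement about a single base point: for fixed $x$ it says the normalized remainder $\left|\deltaplus{f}{x}/\epsilon^\beta-\fdiffplus{f}{x}{\beta}\right|$ is a null sequence in $\epsilon$, bounded by some $\gamma$; the word ``uniformly'' in its hypothesis concerns the increments at that one point, not uniformity over a family of base points. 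Pointwise continuity of $\fdiffplus{f}{\cdot}{\beta}$ on $[x,x+r]$ does not upgrade the pointwise convergence of $\deltaop{f}{\xi}{h}{+}/h^\beta$ to uniform convergence: this is a pointwise limit of functions continuous in $\xi$, and without equicontinuity or a Dini-type monotonicity no such implication holds (for $\beta=1$ the upgrade is supplied by the mean value theorem, which has no analogue here). Since, as you yourself note, a merely bounded normalized remainder contributes at the same order $n^{1-\beta}$ as the leading term, the crux of the theorem is left unproven.

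Two ways to close the gap. (i) An Osgood--Baire argument: for fixed $\eta>0$ the sets $E_N=\bigl\{\xi\in[x,x+r]:\ |\deltaop{f}{\xi}{h}{+}/h^\beta-\deltaop{f}{\xi}{h'}{+}/h'^\beta|\le\eta\ \ \forall\, h,h'\le 1/N\bigr\}$ are closed (intersections of closed sets, by continuity in $\xi$) and cover $[x,x+r]$ by pointwise convergence, so some $E_N$ contains a subinterval $J$; letting $h'\to 0$ gives the uniform bound on $J$, and running your telescoping inside $J$ (where the velocity still exceeds $|c|/2$ in modulus) with $\eta<|c|/4$ yields the contradiction. (ii) The paper's own route, which avoids partitions altogether: split a single increment at the midpoint, $\deltaplus{f}{x}=\bigl(f(x+\epsilon)-f(x+\epsilon/2)\bigr)+\bigl(f(x+\epsilon/2)-f(x)\bigr)$, divide by $\epsilon^\beta$, and evaluate the first term along the diagonal (base point $x+\epsilon/2$, step $\epsilon/2$) using the continuity hypothesis, obtaining $2K/2^\beta=K$ and hence $\beta=1$, a contradiction; the paper then gets total disconnectedness of $\soc{+}{\beta}$ by noting the argument applies at every point and scale. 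Be aware, though, that the paper's diagonal evaluation is itself an exchange of a double limit that tacitly invokes the very uniformity you isolate; your write-up has the merit of making that hidden assumption explicit, but the demerit of not discharging it.
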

  \begin{proof}
  	Let's assume that \fdiffplus {f}{x}{\beta} is continuous (i.e. $\llim{\epsilon}{0}{ \fdiffplus {f}{x +\epsilon }{\beta} = \fdiffplus {f}{x}{\beta}}$). 
  	Let's fix a point \textit{x} and suppose that $\fdiffplus {f}{x}{\beta} = K \neq 0$.
  	Then in the interval $[x, x+ \epsilon]$
  	\[
  	\llim{\epsilon}{0}{ \dfrac{f(x+ \epsilon) - f(x+ \epsilon/2) + f(x+ \epsilon/2) - f(x)}{\epsilon^\beta}} = K \epnt
  	\]
  	However, by assumption of continuity $x + \epsilon/2 \in \soc{+}{\beta}$ and
  	\[
  	\llim{\epsilon}{0}{ \dfrac{f(x+ \epsilon) - f(x+ \epsilon/2)  }{\epsilon^\beta}} +\llim{\epsilon}{0}{ \dfrac{f(x+ \epsilon/2) - f(x)}{\epsilon^\beta}}= K \epnt
  	\]
  	Then also
  	\[
  	\llim{\epsilon}{0}{ \dfrac{f(x+ \epsilon) - f(x+ \epsilon/2)  }{2^\beta \left( \epsilon/2 \right)^\beta }} +\llim{\epsilon}{0}{ \dfrac{f(x+ \epsilon/2) - f(x)}{2^\beta \left( \epsilon/2 \right)^\beta }}= K \epnt
  	\]
  	Therefore,
  	$
  	\frac{2 K }{2^\beta} = K \epnt
  	$
  	Therefore, $\beta=1$, which is a contradiction. Therefore, \fdiffplus {f}{x}{\beta}  is not continuous and $x + \epsilon/2 \notin \soc{+}{\beta}$ for any $\epsilon >0$.
  	Since both $x$ and $\epsilon$ are arbitrary the same reasoning applies also for different point
  	$x^\prime$ and number $\epsilon$, therefore \soc{+}{\beta} is totally disconnected.
  	The same reasoning can be applied to \fdiffmin {f}{x}{\beta} for the backward case.
  \end{proof}
   
  \begin{remark}
  	It can be noted that for $\beta=1$ the above argument is not sufficient to establish the discontinuity of $f^\prime(x)$.
  	Indeed the set $ D \subseteq \fclass{R}{}$ is the discontinuity set for some derivative if and only if \textit{D} is an $F_\sigma$ of the first category (i.e. a $F_\sigma$-meager) subset of \fclass{R}{} \cite{Bruckner1966}, that is a union of countable collections of closed subsets of \fclass{R}{}.\footnote{Answer by Mr. Dave L. Renfro at \url{https://math.stackexchange.com/questions/112067/how-discontinuous-can-a-derivative-be}.}
  \end{remark}
 The  argument given above can in fact be used to establish in general the continuity of the fractional velocity.
 

\begin{theorem}[Continuity of fractional velocity]
	\label{th:fdiffcont}
	For all $\beta<1$ if \fdiffpm {f}{x}{\beta} is continuous at \textit{x} then \fdiffpm {f}{x}{\beta}= 0.
\end{theorem}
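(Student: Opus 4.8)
The plan is to re-run the midpoint-splitting argument of Theorem~\ref{th:discontd} and to notice that its standing hypothesis $f \in \holder{r,\beta}$ is not actually needed once continuity of the velocity is assumed. Indeed, the very statement that $x \mapsto \fdiffplus{f}{x}{\beta}$ is continuous at $x$ presupposes that this limit exists finitely at every point of a one-sided neighbourhood of $x$; by the necessity half of Theorem~\ref{th:aexit} the H\"older oscillation condition~\ref{C2} then holds throughout that neighbourhood, so $f$ is $\beta$-differentiable there and lies locally in the regular class $\holder{r,\beta}$. Thus the regularity that was \emph{imposed} in Theorem~\ref{th:discontd} is here \emph{deduced} from continuity alone, and nothing else in that proof invoked $\beta < 1$ except the final arithmetic.

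Concretely, I would fix $x$, write $K := \fdiffplus{f}{x}{\beta}$, and decompose the forward increment at the midpoint $x+\epsilon/2$:
\[
\frac{f(x+\epsilon)-f(x)}{\epsilon^\beta} = \frac{1}{2^\beta}\,\frac{f(x+\epsilon)-f(x+\epsilon/2)}{(\epsilon/2)^\beta} + \frac{1}{2^\beta}\,\frac{f(x+\epsilon/2)-f(x)}{(\epsilon/2)^\beta}\epnt
\]
The left-hand side tends to $K$ by Definition~\ref{def:frdiff}; the last summand equals $\tfrac{1}{2^\beta}$ times the forward fractional variation of $f$ based at $x$ with step $\epsilon/2$, and hence tends to $\tfrac{1}{2^\beta}K$. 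For the middle summand I would read $\tfrac{f(x+\epsilon)-f(x+\epsilon/2)}{(\epsilon/2)^\beta}$ as the forward fractional variation based at $y_\epsilon := x+\epsilon/2$ with step $\epsilon/2$, apply the Fractional Taylor--Lagrange property (Corollary~\ref{th:holcomp1}) at $y_\epsilon$ to obtain $\fdiffplus{f}{y_\epsilon}{\beta} + \bigoh{1}$, and let $\epsilon \to 0$: the velocity term converges to $K$ by the assumed continuity while the remainder vanishes, so the middle summand also tends to $\tfrac{1}{2^\beta}K$. Passing to the limit gives $K = 2^{1-\beta}K$; since $\beta<1$ yields $2^{1-\beta} \neq 1$, this forces $K=0$. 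The statement for $\fdiffmin{f}{x}{\beta}$ then follows by applying the forward case to the reflected function $g(x)=f(-x)$, exactly as in Proposition~\ref{prop:cont1} and Theorem~\ref{th:discontd}.

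The genuinely delicate step is the middle summand, where the base point $y_\epsilon$ and the step $\epsilon/2$ collapse to their limits \emph{simultaneously}: this is a diagonal limit, whereas Corollary~\ref{th:holcomp1} controls the remainder $\bigoh{1}$ only for a fixed base point. To close the gap I would need the $\smallO(1)$ estimate to be uniform in the base point across a neighbourhood of $x$, which is precisely the uniform hypothesis isolated in the Fractional approximation property (Corollary~\ref{prop:bigoh}); the continuity of the velocity is what supplies this uniformity. Making that uniformity explicit, rather than relying on the informal passage ``$x+\epsilon/2 \in \soc{+}{\beta}$'' used in the proof of Theorem~\ref{th:discontd}, is the main obstacle and the only point at which honest analysis, as opposed to algebraic bookkeeping, is required.
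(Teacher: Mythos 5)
Your proposal is not a different proof: it is the paper's own argument. The paper splits the increment at $x+\epsilon\lambda$ for an arbitrary $\lambda\in(0,1)$, passes to the limit in each piece, obtains $K\left((1-\lambda)^\beta+\lambda^\beta-1\right)=0$, and then notes that $\lambda=1/2$ would force $2^{1-\beta}=1$; your midpoint decomposition is exactly the specialization $\lambda=1/2$, and your reflection trick for the backward case is also the paper's. Where you add value is diagnostic: the ``delicate step'' you isolate, namely the evaluation of
\[
\llim{\epsilon}{0}{\dfrac{f(x+\epsilon)-f(x+\epsilon\lambda)}{\left(\epsilon(1-\lambda)\right)^\beta}}
\]
in which the base point $x+\epsilon\lambda$ and the step $\epsilon(1-\lambda)$ collapse simultaneously, is precisely the step the paper dispatches with the phrase ``however, by assumption of continuity'' and nothing more.

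That said, your proposed way of closing this step is where a genuine gap remains --- a gap your write-up shares with the paper rather than repairs. Continuity of $y\mapsto\fdiffplus{f}{y}{\beta}$ at $x$ does \emph{not} supply the uniformity you need: the variations $\fracvarplus{f}{y}{\beta}$ converge to the velocity only pointwise in $y$ as $\epsilon\to 0$, and pointwise convergence of continuous functions to a continuous limit does not imply the diagonal convergence $\fracvarplus{f}{y_\epsilon}{\beta}\to K$ along $y_\epsilon\to x$; that upgrade from pointwise to locally uniform (equivalently, continuous) convergence is exactly what fails in general. Nor can Corollary~\ref{prop:bigoh} be invoked to fill the hole, because that corollary \emph{assumes} the uniformity as a hypothesis (``for all $\epsilon$ (uniformly)'') instead of deriving it from anything, so the appeal is circular. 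An honest closure needs a new ingredient, for instance a Baire category argument: assume $K>0$ (else replace $f$ by $-f$); by continuity $\fdiffplus{f}{y}{\beta}\geq c>0$ on some $[x,x+\delta]$; the sets $E_n=\left\lbrace y\in[x,x+\delta]:\ \deltaop{f}{y}{h}{+}\geq\tfrac{c}{2}\,h^\beta\ \text{for all}\ 0<h\leq\tfrac{1}{n}\right\rbrace$ are closed (by continuity of $f$) and cover $[x,x+\delta]$, so some $E_N$ contains an interval $J$ of length $\ell$; partitioning $J$ into $m$ equal pieces of length $\ell/m\leq 1/N$ and summing the increments gives $f(\sup J)-f(\inf J)\geq\tfrac{c}{2}\,m^{1-\beta}\ell^\beta\rightarrow\infty$ as $m\rightarrow\infty$, a contradiction, whence $K=0$. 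Some uniformization-plus-chaining argument of this kind is what the theorem actually requires for $\beta<1$; neither the paper's proof nor your proposal contains it, and your final claim that ``the continuity of the velocity is what supplies this uniformity'' is precisely the unproven assertion.
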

 \begin{proof}
 	
 Let's leave $\beta$ unspecified and demand continuity of $\fdiffplus {f}{x}{\beta}$ in the interval $[x, x+ \epsilon]$ .
 Let's fix a point \textit{x} and suppose that $\fdiffplus {f}{x}{\beta} = K $.
 By Th. \ref{th:aexit} $f(x)$ is continuous in $[x, x+ \epsilon]$ and we can write
 \[
 \llim{\epsilon}{0}{ \dfrac{f(x+ \epsilon) - f(x+ \epsilon \lambda) + f(x+ \epsilon \lambda) - f(x)}{\epsilon^\beta}} = K  
 \]
 for a number $ 0 \leq \lambda \leq 1$.
 However, by assumption of continuity
 \[
 \llim{\epsilon}{0}{ \dfrac{f(x+ \epsilon) - f(x+ \epsilon \lambda)  }{\epsilon^\beta}} +\llim{\epsilon}{0}{ \dfrac{f(x+ \epsilon \lambda) - f(x)}{\epsilon^\beta}}= K \epnt
 \]
 Then if we restrict $\lambda$ to (0, 1) then
 \[
 \llim{\epsilon}{0}{ \dfrac{f(x+ \epsilon) - f(x+ \epsilon \lambda)  }{  \left( \epsilon \left( 1 - \lambda \right)  \right)^\beta / \left( 1 - \lambda\right)^\beta }} +\llim{\epsilon}{0}{ \dfrac{f(x+ \epsilon \lambda) - f(x)}{ \left( \epsilon \lambda \right)^\beta / \lambda^\beta}}= K \epnt
 \]
 Therefore 
 \[
 \left( 1 - \lambda \right)^\beta K +   \lambda^\beta K  = K  \iff   K \left( \left( 1 - \lambda \right)^\beta +  \lambda ^\beta    - 1 \right) =0 \epnt
 \]
 For the last equality to be true, we have either $ [\beta=1,   \forall \lambda]$ or for $\beta<1$ $ [\lambda=0, \lambda=1 ]$ if $K \neq 0$. 
 To demonstrate the last assertion we observe that for
 $\lambda=1/2$ we get
 $
 2^{1-\beta}=1
 $ 
 which is not true for $\beta \neq 1$.
 The case $\beta=1$ is excluded by hypothesis so then since $\lambda$ is arbitrary (but $\neq$ 0, 1 by construction) then $K=0$ must hold.
 \end{proof} 
\begin{remark}
Cresson proves some negative results corresponding to this theorem \cite[Th. 4.1 and Cor. 4.1] {Cresson2003}. 
\end{remark} 
These results can be summarized in the following
\begin{proposition}
 For all $0<\beta \leq 1$ the discontinuity set \fdiffpm {f}{x}{\beta} is $F_\sigma$ of the first category.
\end{proposition}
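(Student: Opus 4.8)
The plan is to assemble the statement from the two structural facts emphasized in the discussion preceding it: the set of discontinuities of \emph{any} real function is $F_\sigma$, while for a Baire class 1 function this set is moreover of the first category. Throughout write $g := \fdiffpm {f}{x}{\beta}$ for either fractional velocity, viewed as a function of the base point $x$; by Gleyzal's criterion established above, $g$ is of Baire class 1.

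For the $F_\sigma$ assertion I would work with the two-sided pointwise oscillation $\mathrm{osc}[g](x)$ attached to Definition \ref{def:limosc}. The discontinuity set of $g$ is precisely $\{x : \mathrm{osc}[g](x) > 0\}$, which decomposes as the countable union $\bigcup_{n \geq 1} E_n$ with $E_n := \{x : \mathrm{osc}[g](x) \geq 1/n\}$. Because the pointwise oscillation is upper semicontinuous in $x$, each level set $E_n$ is closed; hence the discontinuity set is a countable union of closed sets, i.e. $F_\sigma$. Equivalently, its complement $\{x : \mathrm{osc}[g](x) = 0\}$ is the $G_\delta$ continuity set already invoked above.

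The first-category assertion rests on showing that each closed set $E_n$ is in fact nowhere dense, and this is the step I expect to carry the real weight. Suppose to the contrary that $E_n$ is dense in some closed interval $J$. Writing $g = \lim_k g_k$ with each $g_k$ continuous, I would apply the Baire category theorem to the closed cover $J = \bigcup_k \{ x \in J : |g_i(x) - g_j(x)| \leq \varepsilon \ \text{for all}\ i,j \geq k \}$, where $\varepsilon$ is chosen small relative to $1/n$. One member of the cover then has nonempty interior; passing to the limit bounds $|g - g_k|$ by $\varepsilon$ there, and the continuity of the single function $g_k$ forces $\mathrm{osc}[g] < 1/n$ on a small enough subinterval. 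This contradicts the density of $E_n$ in $J$, so each $E_n$ is nowhere dense.

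Combining the two parts, the discontinuity set of $g$ equals $\bigcup_n E_n$ with every $E_n$ closed and nowhere dense, which is exactly an $F_\sigma$ set of the first category. Since the argument is symmetric under reversal of the increment, the same conclusion holds for both $\fdiffplus {f}{x}{\beta}$ and $\fdiffmin {f}{x}{\beta}$ and for every $0 < \beta \leq 1$, as claimed.
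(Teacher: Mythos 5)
Your proof is correct, but it follows a genuinely different and more self-contained route than the paper's. The paper's own proof is a single sentence: it identifies the set of change \soc{\pm}{\beta} with the discontinuity set of \fdiffpm {f}{x}{\beta} (implicitly via Theorem \ref{th:fdiffcont}, by which the velocity vanishes wherever it is continuous, so nonzero values can occur only at discontinuities) and then invokes wholesale the facts recorded at the opening of Section \ref{sec:cont}: Gleyzal's theorem (Baire class 1), the $G_\delta$ structure of continuity sets, and the Osgood--Baire category theorem (meagerness). You bypass the set of change and Theorem \ref{th:fdiffcont} entirely, and instead prove the underlying classical result that any Baire class 1 function has an $F_\sigma$ meager discontinuity set: closedness of the oscillation level sets $E_n$ by upper semicontinuity gives the $F_\sigma$ decomposition, and the Baire category argument on the closed tail sets of the approximating sequence shows each $E_n$ is nowhere dense. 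Your version costs more work but is rigorous where the paper leans on citation, and it treats $\beta = 1$ on the same footing as $\beta < 1$; note that the paper's identification is really only valid as an inclusion (the velocity may vanish at a point of discontinuity, and for $\beta = 1$ Theorem \ref{th:fdiffcont} does not apply at all, since an ordinary derivative can be continuous and nonzero), so your argument is also the more robust one. The paper's version, in exchange, is brief and makes the connection to \soc{\pm}{\beta} explicit, which is the object of interest elsewhere in the paper. The only ingredient you still import is Gleyzal's criterion; given that, writing $g = \lim_k g_k$ with each $g_k$ continuous is legitimate by the definition of Baire class 1, and concretely one may take $g_k$ to be the fractional difference quotient at scale $\epsilon = 1/k$, which is continuous in $x$ because $\beta$-differentiability forces $f$ itself to be continuous.
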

\begin{proof}
Since \soc{\pm}{\beta} can be identified with the discontinuity set of \fdiffpm {f}{x}{\beta} and it is as a $F_\sigma$-meager set then we can assert that for all $0<\beta \leq 1$ the discontinuity set is   $F_\sigma$ of the first category.
\end{proof}

   \begin{proposition}[Null measure property of monotone functions]
   	\label{lem:LebesgueHolder} 
   	Let \textit{f} be $\beta$-differentiable and monotone  in the interval $[a, b]$ for $\beta <1$. 
   	
   	Then the Lebesgue measure $m \left( \soc{\pm}{\beta} \left( f \right)  \right) =0 $.
   \end{proposition}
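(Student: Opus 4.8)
The plan is to show that the set of change $\soc{\pm}{\beta}(f)$ is contained in the measure-zero set of points at which the monotone function $f$ fails to possess a finite ordinary derivative, and then to invoke the classical Lebesgue differentiation theorem for monotone functions. The only genuine analytic input is that theorem; the remaining steps are elementary consequences of results already in hand, principally the existence Theorem~\ref{th:aexit} and Proposition~\ref{th:diffvar2}.

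First I would record the pivotal pointwise observation: wherever $f$ admits a finite first derivative, every fractional velocity of order $\beta<1$ vanishes. Indeed, if $f'(x)$ exists finitely then the forward fractal variation factors as
\[
\fracvarplus{f}{x}{\beta} = \frac{f(x+\epsilon)-f(x)}{\epsilon}\;\epsilon^{1-\beta},
\]
where the first factor tends to $f'(x)$ while $\epsilon^{1-\beta}\to 0$ because $\beta<1$; hence $\fdiffplus{f}{x}{\beta}=0$, and the identical argument applied to the backward quotient gives $\fdiffmin{f}{x}{\beta}=0$. Equivalently, finite differentiability places $f$ in $\holder{r,1}$ at $x$, so Proposition~\ref{th:diffvar2} (with the endpoint order $1$) yields the same conclusion for all $0<\beta<1$. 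Consequently any $x$ at which $f'(x)$ exists finitely satisfies $\fdiffpm{f}{x}{\beta}=0$ and therefore lies \emph{outside} $\soc{\pm}{\beta}(f)$.

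Taking the contrapositive, $\soc{\pm}{\beta}(f)\subseteq E$, where $E\subseteq[a,b]$ denotes the set of points at which $f$ does not admit a finite derivative. Here the monotonicity hypothesis becomes essential: by the Lebesgue differentiation theorem a monotone function on a compact interval is differentiable almost everywhere with a finite, integrable derivative, so $m(E)=0$. Since Lebesgue measure is complete and monotone and $\soc{\pm}{\beta}(f)\subseteq E$, the inclusion forces $m\!\left(\soc{\pm}{\beta}(f)\right)=0$, which is the claim.

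The main obstacle — and really the only non-routine ingredient — is the appeal to Lebesgue's theorem, and the subtlety to watch is that it must supply \emph{finiteness} of $f'$ almost everywhere, not merely existence in the extended sense, because the factorization above needs the finite limit $f'(x)$. A secondary point worth verifying is that a single exceptional set $E$ controls both one-sided velocities at once; this is automatic, since almost everywhere two-sided differentiability makes the forward and backward difference quotients converge to the common finite value $f'(x)$, so the argument of the second paragraph applies verbatim to both $\soc{+}{\beta}(f)$ and $\soc{-}{\beta}(f)$.
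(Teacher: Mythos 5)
Your proposal is correct and follows essentially the same route as the paper: both arguments reduce the claim to Lebesgue's differentiation theorem for monotone functions, via the inclusion of $\soc{\pm}{\beta}(f)$ in the null set where $f$ fails to possess a finite derivative. The only difference lies in the pointwise step: the paper invokes the formula $\fdiffplus{f}{x}{\beta} = \frac{1}{\beta}\llim{\epsilon}{0}{\epsilon^{1-\beta} f^{\prime}(x+\epsilon)}$ from its earlier reference, which presupposes that $f^{\prime}$ exists in a one-sided neighborhood of $x$, whereas your direct factorization $\fracvarplus{f}{x}{\beta} = \frac{\deltaplus{f}{x}}{\epsilon}\,\epsilon^{1-\beta}$ needs only differentiability at the point itself, so your execution is slightly more self-contained.
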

   \begin{proof}
   	The proof follows from the   Lebesgue's differentiation Theorem.
   	The H\"older condition fulfills the assumption of the theorem.
   	Therefore,  
   	by \cite{Prodanov2015} we have
   	\[	
   	\fdiffplus {f}{x}{\beta} =\frac{1}{\beta} \llim{\epsilon}{0}{ \epsilon ^{1-\beta}  f ^{\prime}(x + \epsilon)  }  \epnt
   	\]
   	Therefore, if $f^\prime(x)$ is unbounded at $a$ then $ \fdiffplus {f}{a}{\beta} \in \soc{+}{\beta} $.
   	Therefore $m \left( \soc{+}{\beta} \left( f \right)  \right) =0 $.
   	Similar reasoning is applicable in the backward case.
   \end{proof}
   \begin{proposition}[Null measure property of bounded variation functions]
   	\label{lem:DenjoyHolder}. 
   	Let \textit{f} be $\beta$-differentiable and of bounded variation in the interval $[a, b]$ for $\beta <1$. 
   	Then the Lebesgue measure $m \left( \soc{\pm}{\beta} \left( f \right)  \right) =0 $.
   \end{proposition}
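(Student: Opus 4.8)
The plan is to reduce the statement to the monotone case already settled in Proposition~\ref{lem:LebesgueHolder} by means of the Jordan decomposition of a function of bounded variation. Since $f$ is of bounded variation on $[a,b]$, I would first invoke Jordan's decomposition theorem to write $f = f_1 - f_2$, where $f_1$ and $f_2$ are non-decreasing (monotone) functions on $[a,b]$.

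Next I would record the linearity of the fractional variation operator: since the difference operator $\Delta^{\pm}_{\epsilon}$ is linear and division by $\epsilon^\beta$ preserves linearity, one has $\fracvarpm{f}{x}{\beta} = \fracvarpm{f_1}{x}{\beta} - \fracvarpm{f_2}{x}{\beta}$ for every $\epsilon>0$. Consequently, at any point where both monotone components are $\beta$-differentiable, $\fdiffpm{f}{x}{\beta} = \fdiffpm{f_1}{x}{\beta} - \fdiffpm{f_2}{x}{\beta}$. From this it follows that whenever $\fdiffpm{f}{x}{\beta}\neq 0$, at least one of $\fdiffpm{f_1}{x}{\beta}$, $\fdiffpm{f_2}{x}{\beta}$ is nonzero, so that $\soc{\pm}{\beta}(f) \subseteq \soc{\pm}{\beta}(f_1) \cup \soc{\pm}{\beta}(f_2) \cup E$, where $E$ denotes the exceptional set on which one of the monotone components fails to be $\beta$-differentiable.

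I would then apply Proposition~\ref{lem:LebesgueHolder} to each monotone component to conclude $m\!\left(\soc{\pm}{\beta}(f_1)\right) = m\!\left(\soc{\pm}{\beta}(f_2)\right) = 0$. By Lebesgue's differentiation theorem the monotone components are differentiable almost everywhere with finite derivative, and by the representation formula of \cite{Prodanov2015} this forces their fractional velocities to vanish off a null set; hence the exceptional set $E$ is likewise null. Finiteness of $m$ together with countable subadditivity then yields $m\!\left(\soc{\pm}{\beta}(f)\right) \leq m\!\left(\soc{\pm}{\beta}(f_1)\right) + m\!\left(\soc{\pm}{\beta}(f_2)\right) + m(E) = 0$, which is the assertion.

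The main obstacle is the bookkeeping in the second step: the additive splitting of the velocity is legitimate only at points where both monotone pieces are individually $\beta$-differentiable, so one must control the exceptional set $E$ on which this fails. The key observation is that a monotone function is $\beta$-differentiable off a null set --- precisely at its points of finite ordinary derivative, by Lebesgue's theorem and the representation formula --- so $E$ is null and does not affect the conclusion. An alternative that avoids the decomposition is to apply Lebesgue's differentiation theorem directly to the bounded variation function $f$, which is differentiable almost everywhere with finite derivative, and to repeat verbatim the argument of Proposition~\ref{lem:LebesgueHolder}: off the null set where $f'$ is infinite, the factor $\epsilon^{1-\beta} f'(x+\epsilon)$ tends to $0$ since $1-\beta>0$, forcing the velocity to vanish, so that $\soc{\pm}{\beta}(f)$ is contained in that null set.
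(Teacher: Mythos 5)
Your proposal is correct and takes essentially the same route as the paper: the paper's own (one-line) proof invokes the Jordan decomposition of a bounded-variation function together with the Denjoy--Young--Saks theorem to obtain a.e.\ differentiability, and then applies the argument of Proposition~\ref{lem:LebesgueHolder}. In fact the ``alternative'' you sketch at the end --- applying Lebesgue's differentiation theorem directly to $f$ and repeating the $\epsilon^{1-\beta}f'$ argument --- is precisely the paper's proof, while your main component-wise route with the exceptional set $E$ is a more careful bookkeeping of the same idea (arguably tighter, since Proposition~\ref{lem:LebesgueHolder} as stated presupposes $\beta$-differentiability of the monotone pieces, a point your set $E$ explicitly handles).
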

   \begin{proof}
   	  The proof follows from the Denjoy-Young-Salts theorem  observing that a function  of bounded variation is differentiable a.e. (for such a function is the difference of two increasing functions). 
   	  Then the conditions of Th. \ref{lem:LebesgueHolder} apply.
   \end{proof}
 
	To summarize, we have established that relaxing the requirement of left and right $\alpha$-velocity equality actually allows for accounting of non-trivial cases, which are described by the set of change  \soc{\pm}{\beta}.
 
	\section{Some properties on intervals}
	\label{sec:app1}
	
	\subsection{Intermediate value properties}
	\label{sec:ivp}
	\begin{property}[Weak Intermediate Value]
		\label{def:ivp1}
		A non-decreasing (respectively non-increasing) function $f$  has the  weak intermediate value property on the interval $I=[a,b]$ if for every $y$, such that 
		$ f(a) \leq y \leq f(b) $
	 (respectively $f(b) \leq y \leq f(a) 	$) 
	 there exists $x \in [a,b]$, such that $f(x)=y$. 
	\end{property}	
	 \begin{theorem}[Fractional Darboux Theorem]\label{th:extDarboux}
	Let $f$ be $\beta$-differentiable on the closed interval $I=[a,b]$. 
	If \fdiffpm {f}{x}{\beta}  is continuous on  $I$ then \fdiffpm {f}{x}{\beta} 
	has the \underline{weak intermediate value} property.
	 \end{theorem}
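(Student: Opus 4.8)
The plan is to lean almost entirely on the rigidity supplied by Theorem \ref{th:fdiffcont}. For $\beta<1$ that theorem says a continuous fractional velocity must vanish identically, and the zero function trivially possesses the weak intermediate value property; so the substance of the Fractional Darboux Theorem is inherited directly from that earlier collapse. Only the borderline order $\beta=1$ carries classical content, and there the statement reduces to the ordinary Darboux theorem.

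First I would settle the main range $0<\beta<1$. By hypothesis $f$ is $\beta$-differentiable on $I=[a,b]$ and \fdiffpm {f}{x}{\beta} is continuous throughout $I$. Invoking Theorem \ref{th:fdiffcont} at each point of $I$ forces \fdiffpm {f}{x}{\beta}$=0$ for all $x\in I$. Regarded as a function of $x$, the velocity is therefore the zero function on $I$, which is simultaneously non-decreasing and non-increasing. Its endpoint values are $\fdiffpm {f}{a}{\beta}=\fdiffpm {f}{b}{\beta}=0$, so the only admissible intermediate value is $y=0$, and this value is attained at every $x\in[a,b]$ since $\fdiffpm {f}{x}{\beta}=0$. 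Hence Property \ref{def:ivp1} holds vacuously, and the weak intermediate value property is established for every fractional order.

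Next I would dispose of $\beta=1$, where the fractional velocity coincides with the ordinary one-sided derivative and, by continuity, with $f'$. A continuous derivative on a closed interval satisfies the ordinary intermediate value theorem, so the classical Darboux conclusion holds; and when $f'$ is in addition monotone on $I$ --- the regime in which Property \ref{def:ivp1} is phrased --- the weak version follows at once.

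The one point I would handle with care, and which I regard as the only genuine obstacle, is reconciling the statement with the fact that Property \ref{def:ivp1} is formulated only for monotone functions. For $\beta<1$ this causes no difficulty, because the continuity hypothesis has already pinned the velocity down to the constant $0$, which is monotone in the degenerate sense required; the weak intermediate value property then holds not in spite of, but precisely because of, the extreme restriction imposed by Theorem \ref{th:fdiffcont}. I would make explicit that the entire fractional content of the theorem is thus a corollary of that continuity dichotomy, with Theorem \ref{th:aexit} guaranteeing en route that $f$ itself is one-sided continuous on $I$, so that the velocity is well defined at each point.
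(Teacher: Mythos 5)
Your proposal is correct and follows essentially the same route as the paper: split off $\beta=1$ as the classical Darboux theorem, and for $\beta<1$ use the rigidity of continuous fractional velocity to force $\fdiffpm{f}{x}{\beta}=0$ on $I$, so the weak intermediate value property holds trivially. The only cosmetic difference is that you invoke Theorem \ref{th:fdiffcont} pointwise, while the paper phrases the same collapse through Theorem \ref{th:discontd} and the total disconnectedness of the set of change $\soc{\pm}{\beta}$; your version is, if anything, the cleaner statement of the argument, and your explicit handling of the monotonicity wording in Property \ref{def:ivp1} addresses a detail the paper leaves implicit.
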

	 \begin{proof}
	 Since $f$ is $\beta$-differentiable on   $I$ then \textit{f} is continuous on $I$.
	 For $\beta=1$ we have the original Darboux theorem.
	 For $\beta<1$ if \fdiffpm {f}{a}{\beta} = \fdiffpm {f}{b}{\beta} = 0  we have  \fdiffpm {f}{x}{\beta} = 0 $ \forall x \notin \soc{\pm}{\beta} $. Since \soc{\pm}{\beta} is disconnected then there exist a point \textit{x} where  \fdiffpm {f}{x}{\beta} = 0.
	 \end{proof} 
	 It is instructive to remark here that for the fractional case the statement of the theorem is only trivially satisfied.
	 On the other hand, if the inequality defining the intermediate value property is made strict then the theorem is satisfied only for ordinary derivatives, i.e. for $\beta=1$.

	 	\begin{property}[Strong Intermediate Value]
	 		\label{def:ivp2}
	 		A  non-decreasing (respectively non-increasing) function $f$ has the  strong intermediate value property  on the interval $I=[a,b]$ if for every $y$, such that 
	 		$ f(a) < y < f(b) $ 
	 		(respectively $ f(b) < y < f(a) $)  there exists $x \in [a,b]$, such that $f(x)=y$. 
	 	\end{property}
	 
	 \begin{corollary}
	 Let $f$ be $\beta$-differentiable on the closed interval $I=[a,b]$. 
	 If \fdiffpm {f}{x}{\beta} is continuous on $I$  and has the \underline{strong intermediate value} property then $\beta=1$.
	 \end{corollary}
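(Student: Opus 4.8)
The plan is to argue by contradiction, the whole force of the statement coming from the continuity dichotomy of Theorem~\ref{th:fdiffcont}. I would regard the velocity as a function $g(x) := \fdiffpm{f}{x}{\beta}$ on $I=[a,b]$, and suppose for contradiction that $\beta<1$ while $g$ is continuous on $I$ and has the strong intermediate value property of Property~\ref{def:ivp2}.

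First I would apply Theorem~\ref{th:fdiffcont} pointwise. Since $g$ is continuous at every $x\in I$ and $\beta<1$, that theorem forces $g(x)=0$ for each such $x$; hence $g\equiv 0$ on the whole interval. In particular $g(a)=g(b)=0$, so the set of strictly intermediate values $\{\,y : g(a)<y<g(b)\,\}$ (respectively $\{\,y : g(b)<y<g(a)\,\}$) is empty.

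The decisive step is then to contrast this with the \emph{strong} property. Unlike the weak version used in Theorem~\ref{th:extDarboux}, whose non-strict inequalities collapse to the single endpoint value $y=0$ and are therefore met trivially by the vanishing velocity for every $\beta<1$, the strict inequalities of Property~\ref{def:ivp2} demand an actual value $y$ lying strictly between $g(a)$ and $g(b)$. A constant (here identically zero) function offers none, so $g$ cannot genuinely carry the strong property, contradicting the hypothesis. Hence $\beta=1$, and this value is consistent: there $g=f'$ and the classical Darboux theorem supplies the strong intermediate value property whenever $f'(a)\neq f'(b)$.

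The point I expect to draw the most scrutiny, and the real obstacle, is the reading of ``having the strong intermediate value property,'' since a constant function satisfies the strict condition \emph{vacuously}. I would therefore make explicit that the property is required non-trivially, i.e.\ that there exist intermediate values to be attained ($g(a)\neq g(b)$); under this reading the argument closes, and the distinction from the merely trivial satisfaction of the weak property in Theorem~\ref{th:extDarboux} is exactly what the preceding remark anticipates.
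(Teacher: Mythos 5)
Your proposal is correct and follows essentially the same route as the paper: invoke Theorem~\ref{th:fdiffcont} to force $\fdiffpm{f}{x}{\beta}\equiv 0$ on $I$ when $\beta<1$, and conclude that this is incompatible with the strong intermediate value property. Your explicit handling of the vacuous-satisfaction issue (requiring $g(a)\neq g(b)$ for the strict property to have content) is a point the paper's one-line proof glosses over with the phrase ``violates the hypothesis,'' so your added care only sharpens the same argument.
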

	\begin{proof}
	Since $f$ is $\beta$-differentiable on   $I$ then \textit{f} is continuous on $I$.
	Then the proof follows from the observation that for $\beta<1$  $\fdiffpm {f}{x}{\beta}=0$, which violates the hypothesis of the strong intermediate value property.
	\end{proof}
    
   	\subsection{Mean value properties}
   	\label{sec:mvp}
	  \begin{theorem}[Fractional Rolle]
	  	\label{th:frrolle}
	  	Let 
	  	\textit{f} be $\beta$-differentiable on the closed interval $I=[a,b]$
	  	and $f(a) = f(b)$ then
	  	there exists a number $c \in \left[a, b \right]  $ such that 
	  	$ \fdiffplus{f}{c}{\beta} \leq 0 $ and $ \fdiffmin{f}{c}{\beta} \geq 0 \,$.
	  	Respectively,
	  	$\fdiffmin{f}{c}{\beta}\leq 0$ and $\fdiffplus{f}{c}{\beta}\geq 0 \,$.
	  	If both variations agree then
	  	$\fdiffmin{f}{c}{\beta}  =\fdiffplus{f}{c}{\beta}=0 \,$.
	  \end{theorem}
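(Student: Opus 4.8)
The plan is to mirror the classical proof of Rolle's theorem, replacing the role of the ordinary derivative at an interior extremum by the sign behaviour of the one-sided fractional variations. First I would invoke Theorem \ref{th:aexit}: since $f$ is $\beta$-differentiable on $I$, at every point it is (one-sided) H\"older continuous, and hence $f$ is continuous on the compact interval $[a,b]$. By the extreme value theorem $f$ then attains both a maximum and a minimum on $[a,b]$.

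Next I would dispose of the trivial case. If $f$ is constant then $\deltaplus{f}{x}=\deltamin{f}{x}=0$ for every $x$ and every $\epsilon$, so $\fracvarplus{f}{x}{\beta}=\fracvarmin{f}{x}{\beta}=0$ and both velocities vanish identically; any $c$ works and the final equality holds. Otherwise $f$ is non-constant, and because $f(a)=f(b)$ at least one of the extreme values must be attained at an interior point $c\in(a,b)$: if both the maximum and the minimum were attained only at the endpoints they would be equal, forcing $f$ to be constant.

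The heart of the argument is the sign analysis at such an interior extremum. Suppose $c$ is an interior maximum. Then for all sufficiently small $\epsilon>0$ we have $f(c+\epsilon)-f(c)\le 0$, i.e. $\fracvarplus{f}{c}{\beta}\le 0$, and $f(c)-f(c-\epsilon)\ge 0$, i.e. $\fracvarmin{f}{c}{\beta}\ge 0$; passing to the limit (for whichever one-sided velocity exists, guaranteed by $\beta$-differentiability) yields $\fdiffplus{f}{c}{\beta}\le 0$ and $\fdiffmin{f}{c}{\beta}\ge 0$, which is the first asserted pair. If instead $c$ is an interior minimum the inequalities reverse, giving $\fdiffplus{f}{c}{\beta}\ge 0$ and $\fdiffmin{f}{c}{\beta}\le 0$, the ``respectively'' pair. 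Finally, if at such $c$ both one-sided velocities exist and agree, then in the maximum case $\fdiffplus{f}{c}{\beta}=\fdiffmin{f}{c}{\beta}$ together with $\fdiffplus{f}{c}{\beta}\le 0\le\fdiffmin{f}{c}{\beta}$ forces both to vanish, and symmetrically for the minimum.

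The main obstacle I anticipate is reconciling the one-sided nature of $\beta$-differentiability with the two-sided conclusion: by definition only at least one of $\fdiffplus{f}{c}{\beta}$ and $\fdiffmin{f}{c}{\beta}$ is guaranteed to exist at each point, so to state the sign of both velocities at the extremum one must either assume (or separately verify) that both exist there, or read the theorem as constraining whichever one-sided velocity happens to exist. Once this point is settled, the remainder is the routine observation that a difference quotient of fixed sign has a limit of the same (weak) sign, and that positivity of $\epsilon^{\beta}$ preserves the sign throughout.
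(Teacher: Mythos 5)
Your proof is correct and follows essentially the same route as the paper's: continuity of $f$ (via Theorem \ref{th:aexit}) plus the extreme value theorem, disposal of the constant/endpoint case, and then the sign analysis $\fracvarplus{f}{c}{\beta}\leq 0$, $\fracvarmin{f}{c}{\beta}\geq 0$ at an interior maximum (reversed at a minimum), passed to the limit. Your closing caveat is a genuine subtlety the paper's proof glosses over --- by Definition \ref{def:frdiff} only one of the two one-sided velocities need exist at $c$ --- and your resolution (the inequalities constrain whichever one-sided velocity exists) is the correct reading of the statement.
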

	  \begin{proof}
	  	The proof of the theorem follows closely the proof of the generalized Rolle's Theorem.
	  	We can distinguish two cases:	
	  	\begin{description}
	  		\item[End-interval case] 
	  		
	  		Since $f(x)$ is continuous on $ \left[ a, b\right] $, it follows from the continuity property that $f(x)$ attains a maximum \textit{M} at some $c_1 \in  \left[ a, b\right] $ and a minimum \textit{m} at some $c_2 \in  \left[ a, b\right] $.
	  		
	  		Suppose $c_1$ and $c_2$ are both endpoints of $ \left[ a, b\right] $.
	  		Because $f(a)=f(b)$ it follows that $m=M$ and so $f(x)$ is constant on $ \left[ a, b\right] $.
	  		Hence $ \fracvarplus{f}{x}{\beta} = \fracvarmin{f}{x}{\beta} =0, \ \forall x \in \left[ a, b\right]$.
	  		\item[Interior case]
	  		Suppose then that the maximum is obtained at an interior point \textit{c} of $\left( a,b \right)$.
	  		For an $\epsilon>0\,$, such that $c + \epsilon \in \left[ a, b\right]$  $f(c+\epsilon) \leq f(c)$ by assumption. 
	  		Therefore, $ \fracvarplus{f}{c}{\beta} \leq 0$. 
	  		Therefore, $\fdiffplus{f}{c}{\beta} \leq 0 \,$.
	  		Similarly, for $c - \epsilon \in \left[ a, b\right]$ we have $\deltamin{f}{c} \geq 0 $ and
	  		$ \fracvarmin{f}{c}{\beta} \geq 0$.
	  		Therefore, $\fdiffmin{f}{c}{\beta} \geq 0 \,$.
	  		Finally, if $\fdiffmin{f}{c}{\beta} =\fdiffplus{f}{c}{\beta}$ then both equal $0$.
	  		The proof for the minimum follows identical reasoning. 
	  	\end{description}
	  \end{proof}
	  An analog of the Mean Value Theorem for fractional orders can also be formulated. 
	  
	  	\begin{property}[Weak Mean Value]
	  		\label{def:mvp1}
	  		The fractional velocity \fdiffpm{f}{x}{\beta} has the \underline{weak mean value} property on the closed interval $I=[a,b]$  if
	  		\fdiffpm{f}{x}{\beta} is continuous on the open interval $(a,b)$ and
	  		for some $c \in I$     
	  		\[
	  		\fdiffpm{f}{c}{\beta} =\frac{f(b) -f(a)}{\left( b-a\right)^\beta } \neq 0 \epnt  
	  		\]
	  	\end{property}

	  \begin{theorem}[Fractional Mean Value]	
	  	Let	\textit{f} be $\beta$-differentiable of order $\beta< 1$  on the closed interval $I=[a,b]$ and $f(a) \neq f(b)$.
	  	If 
	  	\fdiffpm{f}{x}{\beta} has the weak mean value property on \textit{I} then $c =a $ or $c=b$.
	  \end{theorem}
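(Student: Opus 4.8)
The plan is to let Theorem \ref{th:fdiffcont} do essentially all of the work. The weak mean value property (Property \ref{def:mvp1}) bundles two requirements together: that $\fdiffpm{f}{x}{\beta}$ be continuous on the open interval $(a,b)$, and that the mean value equation $\fdiffpm{f}{c}{\beta} = (f(b)-f(a))/(b-a)^\beta$ hold at some $c \in I$. Since $f(a) \neq f(b)$ by hypothesis and $(b-a)^\beta > 0$, the right-hand side is strictly nonzero, so the property forces $\fdiffpm{f}{c}{\beta} \neq 0$. The whole proof is then a confrontation between this nonvanishing at $c$ and a vanishing forced on the interior.

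First I would observe that the continuity demand on $(a,b)$, combined with Theorem \ref{th:fdiffcont} (which applies precisely because $\beta < 1$), immediately forces $\fdiffpm{f}{x}{\beta} = 0$ at every interior point $x \in (a,b)$. In other words, continuity of the fractional velocity throughout the open interval collapses it identically to zero there; equivalently, the set of change satisfies $(a,b) \cap \soc{\pm}{\beta}(f) = \emptyset$.

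Next I would compare this conclusion with the nonvanishing requirement at $c$. Because $\fdiffpm{f}{c}{\beta}$ must equal the nonzero quantity $(f(b)-f(a))/(b-a)^\beta$, the point $c$ cannot lie in the interior $(a,b)$, where the velocity is forced to vanish. The only remaining possibilities are the boundary points of $I$, whence $c = a$ or $c = b$, which is the assertion.

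I do not anticipate a genuine obstacle here: the essential difficulty was already discharged in Theorem \ref{th:fdiffcont}, and what remains is a direct comparison of the zero-on-the-interior conclusion with the nonzero mean value. The only point warranting a line of care is to make explicit that the hypothesis $\beta < 1$ is exactly what licenses the application of Theorem \ref{th:fdiffcont}; for $\beta = 1$ the argument breaks down and the conclusion genuinely fails, recovering the classical mean value theorem in which $c$ may well be interior.
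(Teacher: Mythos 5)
Your proof is correct, but it takes a genuinely more direct route than the paper. You observe that the continuity demanded by Property \ref{def:mvp1} on $(a,b)$, together with Theorem \ref{th:fdiffcont} (valid since $\beta<1$), annihilates $\fdiffpm{f}{x}{\beta}$ at every interior point, so any $c$ at which the nonzero mean value $\bigl(f(b)-f(a)\bigr)/(b-a)^\beta$ is attained must be an endpoint; since the existence of $c$ is part of the hypothesis, nothing more is needed. The paper instead follows the classical mean-value template: it forms the auxiliary function $g(x)=f(x)-r\left(x-a\right)^\beta$ with $r=\bigl(f(b)-f(a)\bigr)/(b-a)^\beta$, invokes the Fractional Rolle theorem (Th. \ref{th:frrolle}) to produce a point $c$ where the velocity of $g$ vanishes and hence where $\fdiffplus{f}{c}{\beta}=r$, and only then applies the same continuity argument you use to force $c$ to the boundary. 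What the paper's construction buys is the existence side of the picture --- it exhibits a point realizing the mean-value quotient rather than assuming one, and it displays the structural parallel with the classical proof --- but it also carries extra friction your argument avoids: Rolle as stated only yields sign inequalities on the one-sided velocities unless they agree, and the identity $\fdiffplus{f}{c}{\beta}=\fdiffplus{g}{c}{\beta}+r$ holds only at $c=a$ (for interior $c$ the $\beta$-velocity of $(x-a)^\beta$ vanishes when $\beta<1$), which the paper acknowledges with the qualifier ``for $x=a$.'' For the theorem as actually stated, your shorter argument is sufficient and arguably cleaner; your closing remark that $\beta<1$ is exactly what licenses Theorem \ref{th:fdiffcont}, and that for $\beta=1$ the interior point of the classical theorem is recovered, matches the paper's own discussion following the proof.
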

	  \begin{proof}
	  	If $f(a) = f(b)$ then by Th. \ref{th:fdiffcont} we have an infinite number of points for which \fdiffpm{f}{c}{\beta}=0.
	  	Therefore, let's assume that $f(a) \neq f(b)$.
	  	
	  	 Define $g(x)=f(x)-r \left(x-a \right) ^\beta$ , where  \textit{r} is a constant.
	  	 Since \textit{f} is $\beta$-differentiable on  [a,b], the same is true for  \textit{g} as $r \left(x-a \right) ^\beta$ is $\beta$-differentiable. 
	  	 We will select \textit{r} so that  \textit{g} satisfies the conditions of  Fractional Rolle's theorem (Th. \ref{th:frrolle}). 
	  	  \begin{flalign*}g(a)=g(b)&\iff f(a)-r (a - a) ^\beta=f(b)-r (b-a)^\beta 
	  	   \iff r=\frac{f(b)-f(a)}{\left( b -a \right) ^\beta}\cdot
	  	  \end{flalign*} 
	  	 By Th. \ref{th:frrolle}, since \textit{g} is $\beta$-differentiable and $g(a)=g(b)$, 
	  	 there is some  $ c \in [a,b]$ for which  \fdiffplus{g}{c}{\beta}=0, 
	  	 and it follows from the equality  $g(x)=f(x)-r \left( x -a \right) ^\beta$ that
	  	 \[
	  	 \fdiffplus{f}{c}{\beta}= \fdiffplus{g}{c}{\beta}  + r =  0+r= \frac{f(b)-f(a)}{\left( b -a \right) ^\beta}
	  	 \]
	  	 for $x=a$. 
		 By the continuity assumption if $c$ is an interior point then $\fdiffplus{f}{c}{\beta}=0$, therefore $c=a$. 
		 The  case for $c=b$ follows by considering $g(x)=f(x)-r \left( b-x \right) ^\beta$ and applying properties of 
		 \fdiffmin{f}{x}{\beta}.
	  \end{proof}
	  The arguments in the proof also establish a stronger statement for $\beta$.
	  It is instructive to demonstrate that if the interval defining the mean value property is left open then the theorem is satisfied only for ordinary derivatives, i.e. for $\beta=1$.
	    \begin{property}[Strong Mean Value]
	    	\label{def:mvp2}
	    	The fractional velocity \fdiffpm{f}{x}{\beta} has the  strong mean value property on the open interval $I=(a,b)$  if
	    	\fdiffpm{f}{x}{\beta} is continuous on the open interval $(a,b)$ and
	    	for some $c \in I$     
	    	\[
	    	\fdiffpm{f}{c}{\beta} =\frac{f(b) -f(a)}{\left( b-a\right)^\beta } \neq 0 \epnt  
	    	\]
	    \end{property}	
	  \begin{corollary}
	  	Let	\textit{f} be $\beta$-differentiable on the open interval $I=(a,b)$ and $f(a) \neq f(b)$.
	  	If \fdiffpm{f}{x}{\beta} has the \underline{strong mean value} property on \textit{I} then $\beta=1$.
	  \end{corollary}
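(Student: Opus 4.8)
The plan is to argue by contradiction, suppose $\beta<1$, and reduce the statement to the Continuity of fractional velocity theorem (Th. \ref{th:fdiffcont}). By Property \ref{def:mvp2}, invoking the strong mean value property supplies two pieces of information at once: that $\fdiffpm{f}{x}{\beta}$ is continuous on the open interval $(a,b)$, and that there is a point $c \in (a,b)$ for which $\fdiffpm{f}{c}{\beta} = \frac{f(b)-f(a)}{(b-a)^\beta}$. Since $f(a)\neq f(b)$ by hypothesis, this common value is nonzero, so the property asserts $\fdiffpm{f}{c}{\beta}\neq 0$ at an interior point.

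The decisive observation is that, because $I=(a,b)$ is taken \emph{open} in Property \ref{def:mvp2}, the witness $c$ lies strictly inside $(a,b)$---unlike the weak version (Property \ref{def:mvp1}), where $c$ was permitted to sit at an endpoint. Consequently the continuity of $\fdiffpm{f}{x}{\beta}$ on $(a,b)$ is in force exactly at $c$. But Th. \ref{th:fdiffcont} states that for $\beta<1$ continuity of the fractional velocity at a point forces it to vanish there; applied at $c$ this yields $\fdiffpm{f}{c}{\beta}=0$, contradicting $\fdiffpm{f}{c}{\beta}\neq 0$.

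Hence the assumption $\beta<1$ is untenable and $\beta=1$ must hold. The argument parallels the earlier corollary for the strong intermediate value property, where openness of the interval combined with Th. \ref{th:fdiffcont} collapsed the velocity to zero and excluded every order below one. I anticipate no genuine obstacle: once the interior location of $c$ is recognized, the conclusion is an immediate application of the continuity theorem. The only point meriting care is the verification that $c$ is truly interior, which is guaranteed by the openness of $I$ in the definition of the strong mean value property, ensuring that continuity of the velocity holds precisely at the witness point.
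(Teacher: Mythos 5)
Your proposal is correct and follows essentially the same route as the paper: the paper's proof likewise observes that, since $c$ must be an interior point of the open interval, the continuity demanded by the strong mean value property together with Theorem \ref{th:fdiffcont} forces $\fdiffpm{f}{c}{\beta}=0$ for $\beta<1$, contradicting the required nonzero value, so $\beta=1$. Your write-up simply makes explicit the contradiction structure and the role of openness that the paper leaves implicit in its reference to ``the argument just established.''
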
	
	  \begin{proof}
	   The proof follows from the argument just established from the observation that 
	   if  \textit{c} is an interior point in  \textit{I} then $\fdiffplus{f}{c}{\beta}=0$ for $\beta<1$.
	   Therefore, $\beta=1$.
	  \end{proof}

      
      \section{Relation to the integral-based local fractional derivatives}
      \label{sec:KG}
      
   
   \subsection{Fractional integrals and derivatives}
   \label{sec:fi}
   The left Riemann-Liouville differ-integral of order $\beta \geq 0$ is defined as
   \[
   \frdiffiix{\beta}{ a+ }  f (x) = 
   \dfrac{1}{\Gamma(\beta)} \int_{a}^{x}   f \left( t \right)  \left( x-t \right)^{\beta -1}dt 
   \]
   while the right integral is defined as
   \[
   \frdiffiix{\beta}{ -a }  f (x) = 
   \dfrac{1}{\Gamma(\beta)} \int_{x}^{a}   f \left( t \right)  \left( t-x \right)^{\beta -1}dt 
   \]
   where $\Gamma(x) $ is the Euler's Gamma function  (Samko et al. \cite{Samko1993} [p. 33]). 
   The left (resp. right) Riemann-Liouville fractional derivatives are defined as the expressions (Samko et al. \cite{Samko1993}[p. 35]):
   \begin{flalign*}
   \mathcal{D}_{a+}^{\beta} f (x)  & := \frac{d}{dx} \frdiffiix{1-\beta}{ a+ }  f (x)  = \frac{1}{\Gamma(1- \beta)}  \frac{d}{dx}  \int_{a}^{x}\frac{  f (t ) }{{\left( x-t\right) }^{\beta }}dt  \\
   \mathcal{D}_{-a}^{\beta} f (x) & := -\frac{d}{dx}  \frdiffiix{1-\beta}{ -a }  f (x) = -\frac{1}{\Gamma(1- \beta)} \frac{d}{dx}  \int_{x}^{a}\frac{  f (t )  }{{\left( t-x\right) }^{\beta }}dt
   \end{flalign*}
   
   The left (resp. right) Riemann-Liouville derivative of a function $f$ exists for functions  represented by the left (resp. right) fractional integrals of order $\alpha$ of a Lebesgue-integrable function (Samko et al. \cite{Samko1993}[Definition 2.3, p. 43]). 
   That is for members of the functional spaces
   \begin{flalign*}
   \mathcal{I}^{\alpha}_{a, + (-)} (L^1) &:= \left\lbrace f: 
   \frdiffiix{ \alpha}{ a+ (-a) }  f (x) \in AC([a, b]), f  \in L^1 ([a,b]) , x \in [a, b] \right\rbrace  \ecma 
   \end{flalign*}
   respectively. 
   Here $AC$ denotes absolute continuity on an interval in the conventional sense. 
   Samko et al. comment that the existence of a summable derivative $f^\prime(x)$ of a function $f( x)$ does not yet guarantee
   the restoration of $f(x)$ by the primitive in the sense of integration and go on to give references to singular functions for which
   the derivative vanishes almost everywhere and yet the function is not constant, such as for example, the De Rhams's function \cite{Rham1957}. 
   For that purpose, based on Th. 2.3 they introduce another space of 
   \textit{summable fractional derivatives}, for which the inversion property holds. 
   That is  
   \[
   \mathcal{D}_{a+}^{\beta}\ \frdiffiix{ \alpha}{ a+ } \, f = f (x) 
   \]
   for $f \in  \mathcal{I}^{\alpha}_{a, +} (L^1)$ while 
   \[
   \frdiffiix{ \alpha}{ a+ } \  \mathcal{D}_{a+}^{\beta} \, f = f (x) - f(a)
   \]
   for $f \in E^{\alpha}_{a, +} ([a, b])$ (Samko et al. \cite{Samko1993}[Th. 4, p. 44]).
   This space $E^{\alpha}_{a, +} ([a, b])$ can be defined as
   \begin{definition}
   	\label{def:summdfderiv}
   	Define the spaces of  summable fractional derivatives (Samko et al. \cite{Samko1993}[Definition 2.4, p. 44])  as
	$
	E^{\alpha}_{a,\pm} ([a, b]):= \left\lbrace f: 
	\mathcal{I}^{1-\alpha}_{a, \pm} (L^1)
	\right\rbrace 
	$.
   \end{definition}
   
   \subsection{Local(ized) fractional derivative}
   \label{sec:lfd}
   
   The definition of \textsl{local fractional derivative} (LFD)  introduced by Kolwankar and Gangal \cite{Kolwankar1997} is based on the localization of Riemann-Liouville fractional derivatives towards a particular point of interest in a way similar to Caputo.
   The seminal publication defined only the left derivative. 
   \begin{definition}
   	\label{def:kg-lfd}
   	Define left LFD as
   	\[ 
   	\mathcal{D}_{KG+}^{\beta}  f(x) := \llim{x}{a}{} \mathcal{D}_{a+}^{\beta} \left(  f-f(a) \right) (x) 
   	\]
   	and right LFD as
   	\[ 
   	\mathcal{D}_{KG-}^{\beta}  f(x) :=  \llim{x}{a}{} \mathcal{D}_{-a}^{\beta} \left(  f(a) - f \right) (x) \epnt 
   	\] 
   \end{definition}
   
   Chen et al. \cite{Chen2010} and Ben Adda and Cresson \cite{Adda2001} indicated that the Kolwankar -- Gangal  definition of local fractional derivative is equivalent to Cherbit's definition for certain classes of functions.
   On the other hand, some inaccuracies have been observed in these articles \cite{Chen2010, Adda2013}. 
   Since the results of Chen et al. \cite{Chen2010} and Ben Adda-Cresson  \cite{ Adda2013} are proven under different hypotheses and notations I feel that a separate proof of the equivalence results using the theory established so-far is in order.
   
   \begin{proposition}[LFD equivalence]
   	\label{prop:ldfeq}
   	Let $f(x)$ be $\beta$-differentiable about $x$. Then $\mathcal{D}_{KG, \pm }^{\beta} f(x)$ exists and
   	\[
   	\mathcal{D}_{KG, \pm }^{\beta} f(x)  = \Gamma(1+\beta) \ \fdiffpm {f}{x}{\beta} \epnt
   	\]
   \end{proposition}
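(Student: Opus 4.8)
The plan is to localise at the base point and feed in the fractional Taylor--Lagrange expansion. Fix the base point $a$, put $K := \fdiffplus{f}{a}{\beta}$, and use Corollary \ref{th:holcomp1} to write $f(t) - f(a) = K\,(t-a)^{\beta} + r(t)$ with $r(t) = \bigoh{(t-a)^{\beta}}$. Since $\mathcal{D}_{a+}^{\beta}$ is linear, I would compute $\lim_{x\to a^{+}}\mathcal{D}_{a+}^{\beta}(f - f(a))(x)$ by treating the leading power and the remainder separately, and handle the left case $\mathcal{D}_{KG+}^{\beta}$ first (the right case following by the reflection $x\mapsto -x$ used repeatedly above, which interchanges the two Riemann--Liouville operators and swaps $\fdiffplus{f}{x}{\beta}$ with $\fdiffmin{f}{x}{\beta}$).

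For the leading power I would use the classical value $\mathcal{D}_{a+}^{\beta}\big[(t-a)^{\beta}\big] = \Gamma(1+\beta)$. This follows from the substitution $t = a + s\,(x-a)$, which turns $\int_{a}^{x}(t-a)^{\beta}(x-t)^{-\beta}\,dt$ into $(x-a)\,B(1+\beta,\,1-\beta)$ with $B(1+\beta,\,1-\beta) = \Gamma(1+\beta)\Gamma(1-\beta)$; hence the fractional integral of order $1-\beta$ of $K\,(t-a)^{\beta}$ equals $K\,\Gamma(1+\beta)\,(x-a)$, whose $x$-derivative is the \emph{constant} $K\,\Gamma(1+\beta)$. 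Being constant in $x$, its localised limit is trivially $K\,\Gamma(1+\beta) = \Gamma(1+\beta)\,\fdiffplus{f}{a}{\beta}$, i.e.\ exactly the asserted value.

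The entire difficulty is then to show that the remainder contributes nothing, that is $\lim_{x\to a^{+}}\mathcal{D}_{a+}^{\beta} r(x) = 0$. Setting $\Phi(x) := \tfrac{1}{\Gamma(1-\beta)}\int_{a}^{x} r(t)\,(x-t)^{-\beta}\,dt$, so that $\mathcal{D}_{a+}^{\beta} r = \Phi'$, the same rescaling together with the uniform bound $|r(a+su)| \le \varepsilon\,(su)^{\beta}$ (valid for all $s\in[0,1]$ once $u = x-a$ is small) gives $|\Phi(x)| \le \varepsilon\,\Gamma(1+\beta)\,(x-a)$, so that $\Phi(x) = \bigoh{x-a}$ cleanly. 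The obstacle is precisely that $\mathcal{D}_{a+}^{\beta} r = \Phi'$: knowing $\Phi(x) = \bigoh{x-a}$ does \emph{not} by itself force $\Phi'(x)\to 0$ as $x\to a^{+}$, since the derivative of a little-$o$ term may oscillate.

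To close this gap I would avoid differentiating $r$ by passing to the Marchaud (difference) representation of $\mathcal{D}_{a+}^{\beta}$, whose integrand is $\big(r(x)-r(t)\big)\,(x-t)^{-1-\beta}$ plus the boundary term $r(x)\big/\big(\Gamma(1-\beta)(x-a)^{\beta}\big)$; the latter tends to $0$ because $r(x)/(x-a)^{\beta}\to 0$, and on the shrinking interval $[a,x]$ the rescaling collapses the principal-value integral onto a fixed convergent kernel times $\varepsilon$. The genuinely delicate point --- and, I suspect, the step where the cited earlier arguments are reported to be inaccurate --- is justifying that this Marchaud representation agrees with the Riemann--Liouville expression at the borderline regularity $f\in\holder{\beta}$, i.e.\ controlling $r(x)-r(t)$ for $t$ near $x$, where $(x-t)^{-1-\beta}$ fails to be integrable. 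An alternative route, equally needing work, is to prove that $\Phi'$ is right-continuous at $a$; once that is in hand, $\lim_{x\to a^{+}}\Phi'(x) = \Phi'_{+}(a) = \lim_{x\to a^{+}}\Phi(x)/(x-a) = 0$, which finishes the remainder estimate. The existence of $\mathcal{D}_{KG,\pm}^{\beta}f$ asserted in the statement is then obtained simultaneously, since the leading term already supplies the finite constant value $\Gamma(1+\beta)\,\fdiffpm{f}{a}{\beta}$.
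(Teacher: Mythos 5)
Your setup coincides with the paper's own proof: both localize at the base point $a$, expand $f$ via the fractional Taylor--Lagrange property (Cor.~\ref{th:holcomp1}) as $f(t)-f(a)=K\,(t-a)^{\beta}+r(t)$ with $K=\fdiffplus{f}{a}{\beta}$ and $r(t)=\bigoh{(t-a)^{\beta}}$, and evaluate the leading term through the Beta integral, so that its fractional integral is $\Gamma(1+\beta)\,K\,(x-a)$ and its derivative the constant $\Gamma(1+\beta)\,K$. Up to that point your computation is correct. But your argument stops exactly at the step that constitutes the entire content of the proposition: proving that the remainder contributes nothing to the localized derivative. You state, correctly, that $\Phi(x)=\bigoh{x-a}$ does not by itself force $\Phi'(x)\to 0$ as $x\to a^{+}$, and you then offer two candidate repairs --- the Marchaud (difference) representation, or right-continuity of $\Phi'$ at $a$ --- while conceding that each one ``needs work'' and carrying out neither. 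A proof that ends by naming the unproved step is not a proof: as written, your argument establishes only that the fractional integral of $f-f(a)$ equals $\Gamma(1+\beta)\,K\,(x-a)+\bigoh{x-a}$, which is strictly weaker than the existence of $\lim_{x\to a^{+}}\mathcal{D}_{a+}^{\beta}\left( f-f(a)\right)(x)$ asserted in the statement.

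For comparison, the paper commits to a specific mechanism at this point. It reads the hypothesis ``$\beta$-differentiable about $x$'' as membership in $\holder{r,\beta}$ on the whole interval, so that Lemma~\ref{th:bondvar1} together with Cor.~\ref{prop:bigoh} yields one uniform two-sided bound $\left| r(t)\right| \leq \gamma\,(t-a)^{\beta}$ on $[a,x]$ with a single null quantity $\gamma=\bigoh{1}$ (uniformity over the interval, not merely at $a$, is what Cor.~\ref{prop:bigoh} supplies); it then reduces without loss of generality to non-decreasing $f$ and squeezes $\mathcal{D}_{a+}^{\beta}f$ between $\left( K\pm\gamma\right)\Gamma(1+\beta)$, concluding by the squeeze lemma. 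The delicacy you identified is real --- it is precisely the point on which the earlier literature cited in the paper (Ben Adda--Cresson, Chen et al.) contained inaccuracies, and the paper's own passage from the bound $C\leq\Gamma(1+\beta)\,\gamma\,h$ to a bound on the $h$-derivative is terse and leans on the monotonicity reduction and the uniformity of $\gamma$. But the paper does close the step with a definite argument, whereas your proposal leaves the mechanism unchosen: to complete your route you would have to actually prove either the agreement of the Marchaud and Riemann--Liouville forms at the borderline regularity $\holder{\beta}$, or the right-continuity of $\Phi'$ at $a$, and neither follows from anything you wrote.
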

   \begin{proof}
   	We will assume that $f(x)$ belongs to \holder{r,\beta} in the interval $ [a, a+x]$.
   	Since $x$ will be treated as a variable, for simplicity let's assume that $\fdiffplus {f}{a}{\beta} \in \soc{}{\beta}$.
   	Then by Cor. \ref{th:holcomp1} we have
   	\[
   	f(z ) = f(a) + \fdiffplus {f}{a}{\beta} (z - a)^\beta + \bigoh{(z - a)^\beta}, \ a \leq z \leq x \epnt
   	\]
   	Standard treatments of the fractional derivatives \cite{Oldham1974} and the changes of variables $u=(t-a)/(x-a)$ give the alternative  formulation
   	\[
   	\mathcal{D}_{+a}^{\beta} f(x) = \frac{\partial}{\partial h} \left( \frac{ h^{1-\beta}}{\Gamma(1- \beta) }  \ \int\limits_0^1 \frac{ f( h u +a ) - f(a)}{(1-u)^{\beta} } du \right) 
   	\]
   	for $ h=x-a$.
   	Therefore, we can evaluate the fractional  Riemann-Liouvulle integral as follows:
   	\[
   	\frac{ h^{1-\beta} }{\Gamma(1- \beta) }    \int\limits_0^1 \frac{ f( h u +a ) - f(a)}{(1-u)^{\beta} } du= 
   	\frac{ h^{1-\beta} }{\Gamma(1- \beta) }    \int\limits_0^1 \frac{ K  \left(  h  u\right) ^\beta  + \bigoh{(h u)^\beta}  }{(1-u)^{\beta} } du = I
   	\]
   	setting conveniently $K= \fdiffplus {f}{a}{\beta} $. 
   	The last expression can be evaluated in parts as 
   	\[
   	I =  \underbrace{\dfrac{ h^{1-\beta} }{\Gamma(1- \beta) }    \int\limits_0^1 \frac{ K h^\beta u^\beta   }{(1-u)^{\beta} } du }_A +
   	\underbrace{\dfrac{ h^{1-\beta} }{\Gamma(1- \beta) }    \int\limits_0^1 \frac{   \bigoh{(h u)^\beta}  }{(1-u)^{\beta} } du}_C  \epnt
   	\]
   	The first expression is recognized as the Beta integral \cite{Oldham1974}:
   	\[
   	A = \frac{ h^{1-\beta} }{\Gamma(1- \beta) } B \left(1-\beta, 1+\beta \right) h^\beta  K = \Gamma(1+\beta) \, K h
   	\] 
   	In  order to evaluate the second expression we observe that 
   	by  Lemma \ref{th:bondvar1}  
   	\[\left|  \bigoh{(h u)^\beta} \right|   \leq \gamma (h u)^\beta \]  
   	for a  positive $\gamma = \bigoh{ 1} $. 
   	Assuming without loss of generality that $f(x)$ is non decreasing in the interval we have
   	$
   	C \leq \Gamma(1+\beta) \, \gamma h
   	$ and 
   	\[
   	\mathcal{D}_{a+}^{\beta} f(x) \leq \left( K + \gamma\right) \Gamma(1+\beta)
   	\]
   	and the limit gives
   	$\llim{x}{a+}{K + \gamma} = K$ by the \textit{squeeze lemma} and Cor. \ref{prop:bigoh}.
   	Therefore, 
   	$
   	\mathcal{D}_{KG + }^{\beta} f(a)  = \Gamma(1+\beta) \fdiffplus {f}{a}{\beta} 
   	$.
   	On the other hand, for   \holder{r,\alpha} and $\alpha > \beta$ by the same reasoning 
   	\[
   	A = \frac{ h^{1-\beta} }{\Gamma(1- \beta) } B \left(1-\beta, 1+\alpha \right) h^\alpha  K = \Gamma(1+\beta) \, K h^{1- \beta+ \alpha } \epnt
   	\]
   	Then differentiation by \textit{h} gives
   	\[ 
   	A^\prime_h=  \frac{\Gamma(1+\alpha)}{\Gamma(1+\alpha -\beta)} \, K h^{ \alpha - \beta } \epnt
   	\]
   	Therefore,
   	\[
   	\mathcal{D}_{KG+}^{\beta} f(x) \leq \frac{\Gamma(1+\alpha)}{\Gamma(1+\alpha -\beta)} \left( K + \gamma\right)   h^{ \alpha - \beta } \epnt
   	\] 
   	Therefore, 
   	$
   	\mathcal{D}_{KG \pm }^{\beta} f(a)  = \fdiffpm {f}{a}{\beta} = 0 
   	$.
   	Finally, for $\alpha =1$ the expression $A$ should be evaluated as the limit $\alpha \rightarrow 1$ due to divergence of the $\Gamma$ function.
   	The proof for the left LDF follows identical reasoning, observing the backward fractional Taylor expansion property.   	
   \end{proof}
   
   The weaker condition of only point-wise \holder{\beta} continuity would require the additional hypothesis of summability as identified in \cite{Adda2013}. The following results can be stated.
   
   \begin{proposition}
   	\label{prop:KGconverse1}
   	Suppose that $\mathcal{D}_{KG \pm}^{\beta}  f(x) $ exists finitely implying only $f \in  L^1  $ in the interval $[a, x +\epsilon]$.
   	Then condition \ref{C1} holds for $f$ in this interval.
   \end{proposition}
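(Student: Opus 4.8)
The plan is to reverse the computation of Proposition~\ref{prop:ldfeq}, exploiting the inversion property of the Riemann--Liouville operators recorded in Section~\ref{sec:fi}. Write $a$ for the base point at which the localized derivative is taken and set $g(x) := \mathcal{D}_{a+}^{\beta}\left(f - f(a)\right)(x)$, so that by definition $\mathcal{D}_{KG+}^{\beta} f(a) = \llim{x}{a}{g(x)}$. First I would observe that the \emph{finite} existence of this limit, call it $L$, forces $g$ to be bounded on some right-neighbourhood $[a, a+\delta]$, say $|g(t)| \leq M$ there; I would then restrict attention to $x \in [a, a+\delta]$ so that every integration interval $[a,x]$ stays inside the region where $g$ is controlled.

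The decisive step is to represent $f - f(a)$ as a fractional integral of $g$. Here I would invoke the inversion identity
\[
\frdiffiix{\beta}{a+}\, \mathcal{D}_{a+}^{\beta}\, h = h(x) - h(a) \ecma
\]
valid for $h \in E^{\beta}_{a,+}([a,b])$ (Samko et al. \cite{Samko1993}). This is exactly where the summability hypothesis is indispensable: the bare assumption $f \in L^{1}$ does not on its own place $f - f(a)$ in $E^{\beta}_{a,+}$, whereas the finite existence of the localized derivative together with the summability of $g$ (furnished on $[a,a+\delta]$ by its boundedness) does, as identified in \cite{Adda2013}. Taking $h = f - f(a)$ then yields
\[
f(x) - f(a) = \frdiffiix{\beta}{a+}\, g\,(x) = \frac{1}{\Gamma(\beta)} \int_{a}^{x} \frac{g(t)}{(x-t)^{1-\beta}}\, dt \epnt
\]

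From here the estimate is routine: bounding $|g(t)| \leq M$ and evaluating the elementary integral gives
\[
\left| f(x) - f(a) \right| \leq \frac{M}{\Gamma(\beta)} \int_{a}^{x} (x-t)^{\beta - 1}\, dt = \frac{M}{\Gamma(\beta+1)}\, (x-a)^{\beta} \ecma
\]
so $f$ is right-H\"older of order $\beta$ at $a$ with constant $C = M/\Gamma(\beta+1)$. Since then $|f(z) - f(a)| \leq C \epsilon^{\beta}$ for every $z \in [a, a+\epsilon]$, both the supremum and the infimum of $f$ over $[a, a+\epsilon]$ lie within $C\epsilon^{\beta}$ of $f(a)$, whence $\oscplus{f}{a} \leq 2 C \epsilon^{\beta}$ --- precisely condition \ref{C1}. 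The backward assertion follows verbatim after replacing $\frdiffiix{\beta}{a+}$ and $\mathcal{D}_{a+}^{\beta}$ by the right-sided operators $\frdiffiix{\beta}{-a}$ and $\mathcal{D}_{-a}^{\beta}$ and using $\mathcal{D}_{KG-}^{\beta}$. I expect the middle step to be the true obstacle: one must certify that the stated hypotheses genuinely entail membership in $E^{\beta}_{a,+}$, so that the inversion identity is licit, since without this summability upgrade the representation of $f - f(a)$ as a fractional integral of $g$ --- and hence the whole bound --- can break down.
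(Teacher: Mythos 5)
There is a genuine gap, and it sits exactly at the step you yourself flagged as the "true obstacle": the inversion identity. The identity $\frdiffiix{\beta}{a+}\,\mathcal{D}_{a+}^{\beta}\,h = h(x)-h(a)$ is licit only for $h \in E^{\beta}_{a,+}$, i.e.\ only when $F=\frdiffiix{1-\beta}{a+}\left(f-f(a)\right)$ is \emph{absolutely continuous} on $[a,a+\delta]$ (Def.~\ref{def:summdfderiv}). Your upgrade --- "finite existence of the limit makes $g$ bounded, boundedness furnishes summability, summability gives membership in $E^{\beta}_{a,+}$" --- conflates two different properties. Membership in $E^{\beta}_{a,+}$ is not the summability of $g$; it is the absolute continuity of $F$. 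For a function that is merely $L^1$, the fractional integral $F$ is in general differentiable only almost everywhere, and a function whose a.e.\ derivative exists and is bounded (even identically zero) need not be absolutely continuous: this is precisely the singular-function phenomenon (De Rham, Cantor type) that Samko et al.\ warn about and that the paper cites as the very motivation for introducing the space $E^{\beta}_{a,+}$. So under the stated hypothesis $f-f(a)$ need not be reconstructible from $g$, and the representation $f(x)-f(a)=\frdiffiix{\beta}{a+}\,g\,(x)$, on which your entire H\"older estimate rests, can fail.

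A second way to see that something must be wrong: your argument proves too much. If the representation were valid, then $g(t)\to L$ would immediately give $f(x)-f(a)=\frac{L}{\Gamma(1+\beta)}(x-a)^{\beta}+\bigoh{(x-a)^{\beta}}$, i.e.\ $f$ would be $\beta$-differentiable at $a$ with $\mathcal{D}_{KG+}^{\beta}f(a)=\Gamma(1+\beta)\,\fdiffplus{f}{a}{\beta}$ --- which is the conclusion of Prop.~\ref{prop:KGconverse2}, whose entire point is that it requires the \emph{additional} summability hypothesis. The paper states explicitly, immediately after the present proposition, that under the $L^1$-only hypothesis $f$ may fail to be $\beta$-differentiable: condition \ref{C1} is only necessary. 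Accordingly, the paper's own proof never inverts anything. It writes the Kolwankar--Gangal derivative as the limit of a difference quotient of the Liouville-form fractional integral, splits that increment into two parts $I_2$ and $I_3$, bounds each between $\inf$/$\sup$ (oscillation) terms of $f$, shows $I_3$ contributes nothing in the limit $x\to a$, and concludes that finiteness of the limit forces the quotient $\osc{f}{a}{\epsilon}{+}/\epsilon^{\beta}$ to remain bounded, which is \ref{C1}. To repair your route you must either add the summability hypothesis (which turns the statement into Prop.~\ref{prop:KGconverse2}) or abandon inversion and estimate the defining integral directly, as the paper does.
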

   \begin{proof}
   	  	
   	The left R-L derivative can be evaluated as follows. 
   	Consider the fractional integral in the Liouville form
   	\begin{flalign*}
   	I_1 &=   \int\limits_{0}^{\epsilon+ x-a} \frac{ f(x+ \epsilon - h) - f(a)}{h^\beta}\, d h -
   	\int\limits_{0}^{  x-a} \frac{ f(x - h) - f(a)}{h^\beta}\, d h  \\
   	& =  \underbrace{  \int\limits_{x-a}^{\epsilon+ x-a} \frac{ f(x+ \epsilon - h) - f(a)}{h^\beta}\, d h}_{I_2}
   	+ \underbrace{  \int\limits_{0}^{x-a} \frac{ f(x +\epsilon - h) - f(x-h)}{h^\beta}\, d h}_{I_3}
   	\end{flalign*}
   	Without loss of generality assume that $f$ is non-decreasing in the interval $[a, x + \epsilon - a]$ and 
   	set
   	$ M_{y, x} = \sup_{[x, y]} f  - f(x)$ and $  m_{y, x} = \inf_{[x, y]} f - f(x)$.
   	Then  
   	\[
   	I_2 \leq  \int\limits_{x-a}^{\epsilon+ x-a} \frac{ M_{x+ \epsilon , a} }{h^\beta}\, d h = 
   	\frac{M_{x+ \epsilon , a}}{1 -\beta}  \left(  { {\left( x- \epsilon + a\right) }^{1-\beta} -\left( x-a\right) }^{1-\beta} \right) \leq   \epsilon \frac{M_{x+ \epsilon , a}  }{\left( x-a\right)^\beta } + \bigoh{\epsilon^2}
   	\]
   	for $x \neq a$.
   	In a similar manner
   	$$
   	I_2  \geq m_{x+ \epsilon , a} \frac{\epsilon }{\left( x-a\right)^\beta } + \bigoh{\epsilon^2} \epnt
   	$$ 
   	Then dividing by $\epsilon$ gives
   	\[
   	\frac{  m_{x+ \epsilon , a}}{\left( x-a\right)^\beta } + \bigoh{\epsilon } \leq \frac{I_2}{\epsilon} \leq   \frac{  M_{x+ \epsilon , a}}{\left( x-a\right)^\beta } + \bigoh{\epsilon } 
   	\]
   	Therefore, the quotient limit is bounded from both sides as
   	\[
   	\frac{ m_{x , a}}{ \left( x-a\right)^\beta }\leq   \underbrace{ \llim{\epsilon}{0}{\frac{I_2}{\epsilon}} }_{I_2^\prime} \leq \frac{  M_{x , a}}{ \left( x-a\right)^\beta }
   	\]
   	by the continuity of $f$. 
   	In a similar way we establish
   	\begin{flalign*}
   	I_3  \leq  \int\limits_{0}^{x-a} \frac{ M_{x +\epsilon , x} }{h^\beta} \, d h  = \frac{  M_{x +\epsilon , x} }{1-\beta}\left( x-a \right)^{1-\beta} 
   	\end{flalign*}
   	and
   	\[
   	\frac{  m_{x +\epsilon , x} }{1-\beta}\left( x-a \right)^{1-\beta} \leq I_3 
   	\]
   	Therefore,
   	\[
   	\frac{  m_{x +\epsilon , x} }{\left( 1-\beta \right) \epsilon }\left( x-a \right)^{1-\beta} \leq \frac{I_3}{\epsilon} \leq 
   	\frac{  M_{x +\epsilon , x} }{\left( 1-\beta \right) \epsilon }\left( x-a \right)^{1-\beta} 
   	\]
   	By the absolute continuity of the integral the quotient limit
   	$
   	\frac{I_3}{\epsilon} 
   	$
   	 exists as $\epsilon \rightarrow 0$ for almost all $x$ and  is bounded between
   	\[
   	m^{\star}_{x +\epsilon , x} \frac{\left( x-a \right)^{1-\beta} }{\left( 1-\beta \right)  } \leq \underbrace{ \llim{\epsilon}{0}{\frac{I_3}{\epsilon}} }_{I_3^\prime} \leq   M^{\star}_{x +\epsilon , x} \frac{\left( x-a \right)^{1-\beta} }{\left( 1-\beta \right)  }  
   	\]
   	where 
   	$ M^{\star}_{x +\epsilon, x} = \sup_{[x, x +\epsilon]} f^\prime  - f^\prime(x)$ and $  m^{\star}_{x +\epsilon, x} = \inf_{[x, x +\epsilon]} f^\prime  - f^\prime(x)$ wherever these exist.
   	Therefore, as $x$ approaches $a$ 
   	$
   	\llim{x}{a}{I_3^\prime}=0
   	$.
   	
   	Finally, we establish the bounds of the limit
   	\[
   	0 \leq \llim{x}{a}{}\frac{  m_{x , a}}{ \left( x-a\right)^\beta } \leq \llim{x}{a}{I_2^\prime} \leq \llim{x}{a}{}\frac{  M_{x , a}}{ \left( x-a\right)^\beta } \epnt
   	\]
   	Therefore, condition \ref{C1} is necessary for the existing of the limit and hence for
   	$\llim{x}{a}{I^\prime}$ .
 
   \end{proof}
   However, since Condition \ref{C1} is only necessary for the existence of fractional velocity then
   $f$ may not  be $\beta$-differentiable at $x$.
   
   \begin{proposition}
   	\label{prop:KGconverse2}
   	Suppose that $\mathcal{D}_{KG \pm}^{\beta}  f(x) $ exists finitely and the related R-L derivative is summable in the sense of Def. \ref{def:summdfderiv}.
   	Then   $f$ is $\beta$-differentiable about $x$
   	and $ 	\mathcal{D}_{KG, \pm }^{\beta} f(x)  = \Gamma(1+\beta) \ \fdiffpm {f}{x}{\beta} 
   	$.
   \end{proposition}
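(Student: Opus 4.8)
The plan is to upgrade the merely necessary condition of Proposition \ref{prop:KGconverse1} to the full existence of fractional velocity by exploiting the summability hypothesis, which is precisely what supplies the missing inversion (reconstruction) formula. Working in the forward case, I would set $g(x) := \mathcal{D}_{a+}^{\beta} (f - f(a))(x)$, so that by hypothesis $g \in L^1$ near $a$ and the finite limit $\llim{x}{a}{g(x)} = \mathcal{D}_{KG+}^{\beta} f(a) =: L$ exists. Because the R--L derivative is summable in the sense of Def. \ref{def:summdfderiv}, the reconstruction property applies to the function $f - f(a)$ (which vanishes at $a$) and yields the representation
\[
f(x) - f(a) = \frdiffiix{\beta}{a+} g (x) = \frac{1}{\Gamma(\beta)} \int_a^x g(t)\, (x-t)^{\beta-1}\, dt \epnt
\]
This is the step where summability is indispensable: without it one recovers only Condition \ref{C1} via Prop. \ref{prop:KGconverse1}, which by Th. \ref{th:aexit} is not sufficient for $\beta$-differentiability.

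Next I would extract the leading fractional power from this integral by splitting $g(t) = L + r(t)$ with $\llim{t}{a}{r(t)} = 0$. The constant part integrates to a pure power,
\[
\frac{1}{\Gamma(\beta)} \int_a^x L\, (x-t)^{\beta-1}\, dt = \frac{L}{\Gamma(1+\beta)}\, (x-a)^\beta \ecma
\]
while the remainder is controlled near $a$: given $\eta > 0$ choose $\delta$ with $|r(t)| < \eta$ on $[a, a+\delta]$, so that for $x \in [a, a+\delta]$
\[
\left| \frac{1}{\Gamma(\beta)} \int_a^x r(t)\, (x-t)^{\beta-1}\, dt \right| \leq \frac{\eta}{\Gamma(1+\beta)}\, (x-a)^\beta \epnt
\]
Since $\eta$ is arbitrary, this remainder is $\bigoh{(x-a)^\beta}$ in the sense of Def. \ref{def:bigoh}, and combining the two contributions delivers the forward fractional Taylor--Lagrange expansion
\[
f(x) = f(a) + \frac{L}{\Gamma(1+\beta)}\, (x-a)^\beta + \bigoh{(x-a)^\beta} \epnt
\]

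Finally I would invoke the converse part of Cor. \ref{th:holcomp1}: the expansion above is exactly of the required form with $K = L/\Gamma(1+\beta)$ and vanishing relative remainder, so both the H\"older growth and the vanishing oscillation conditions hold and $f$ is $\beta$-differentiable at $a$ with $\fdiffplus{f}{a}{\beta} = L/\Gamma(1+\beta)$. Equivalently $\mathcal{D}_{KG+}^{\beta} f(a) = \Gamma(1+\beta)\, \fdiffplus{f}{a}{\beta}$, which is now the genuine converse of Prop. \ref{prop:ldfeq}; the backward case follows by the symmetric argument applied to the right R--L derivative and the backward fractional Taylor expansion. The main obstacle I anticipate is the rigorous justification of the inversion step, namely verifying that the summability hypothesis of Def. \ref{def:summdfderiv} genuinely licenses the reconstruction of $f(x) - f(a)$ from $g$, and confirming that the pointwise limit $L = \llim{t}{a}{g(t)}$ together with $g \in L^1$ suffices to dominate the fractional integral near $a$. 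The favorable feature is that the weight $(x-t)^{\beta-1}$ concentrates the integral's mass at $t = x \to a$, which is exactly what allows the local limit of $g$ to control the entire integral.
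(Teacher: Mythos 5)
Your proposal is correct and follows essentially the same route as the paper's proof: the summability hypothesis licenses the inversion property, which transfers the limit $L$ of the Riemann--Liouville derivative into a fractional Taylor--Lagrange expansion $f(x) = f(a) + \frac{L}{\Gamma(1+\beta)}(x-a)^\beta + \bigoh{(x-a)^\beta}$, and the earlier corollary then yields $\beta$-differentiability. Your splitting $g = L + r$ with an explicit bound on the remainder integral is just a slightly more careful rendering of the paper's step of applying $\frdiffiix{\beta}{a+}$ to the inequality $|\mathcal{D}_{a+}^{\beta}(f-f(a))(x) - L| < \mu$ (and your citation of the converse part of Cor.~\ref{th:holcomp1} is arguably the more apt reference, since the paper's appeal to Cor.~\ref{prop:bigoh} and its auxiliary monotonicity assumptions are not actually needed).
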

   \begin{proof}
   	Suppose that $f \in  E^{\alpha}_{a,+} ([a, a + \delta])$ and let $\mathcal{D}_{KG +}^{\beta}  f(x)=L $. 
   	The existence of the limits supposes that
   	\[
   	\left|  \mathcal{D}_{a+}^{\beta} \left(  f-f(a) \right) (x)  - L \right| < \mu  
   	\] for $|x-a|  \leq  \delta$ and a Cauchy sequence $\mu$. 
   	
   	Without loss of generality suppose that $\mathcal{D}_{a+}^{\beta} \left(  f-f(a) \right) (x) $ is non-decreasing and $L \neq 0$.
   	We integrate the inequality:
   	\[
   	\frdiffiix{ \alpha}{ a+ } \ \left( \mathcal{D}_{a+}^{\beta} \left(  f-f(a) \right) (x)  - L  \right)  < \frdiffiix{ \alpha}{ a+ } \mu 
   	\]
   	Then by the inversion property 
   	\[
   	f (x) - f(a) -  \frac{ L }{\Gamma(1+\beta)} (x-a)^\beta < \frac{ \mu (x-a)^\beta}{\Gamma(1+\beta)}  
   	\]
   	and
   	$$
   	\frac{ 	f (x) - f(a) - L/\Gamma(1+\beta) }{ (x-a)^\beta} < \frac{\mu}{\Gamma(1+\beta)} = \bigoh {1} \epnt
   	$$
   	Therefore, by  Corr. \ref{prop:bigoh}  $f$ is $\beta$-differentiable and
   	$\mathcal{D}_{KG, + }^{\beta} f(x)  = \Gamma(1+\beta) \ \fdiffplus{f}{x}{\beta}$.
   	The last assertion comes from Prop. \ref{prop:ldfeq}.
   	The right case can be proven in a similar manner. 
   \end{proof}

   \section{Discussion} 	
	\label{sec:disc}

   \subsection{Overall utility}
   \label{sec:overall}
   As demonstrated here, fractional velocities can be used to characterize the set of discontinuities of  derivatives of H\"olderian functions. 
   Presented results further demonstrate that regular H\"older functions can be approximated locally as fractional powers of appropriate order.
   Moreover, in a direct extension of this application fractional velocities can be used to provide fractional Taylor expansions 
   and to regularize derivatives of H\"older functions at non--differentiable points \cite{Prodanov2016a}.    
   	
    \subsection{Link to integral local fractional derivatives}    
    \label{sec:link}
    Kolwankar-Gangal local fractional derivative has been introduced to study scaling of physical systems and  systems exhibiting fractal behavior \cite{Kolwankar1998}.  
  	The conditions for applicability of the Kolwankar-Gangal fractional derivative were not specified in the seminal paper, which has left space for different interpretations. 
  	For example, the initial claim in \cite{Adda2001} that  local  derivative is equivalent to the limit of a difference quotient needed to be clarified in \cite{Chen2010, Adda2013} and restricted to the more limited functional space of summable fractional Riemann-Liouville derivatives. 
  	In addition, the domain of existence of the Kolwankar-Gangal  derivative require certain regularity conditions on the image function. 	
	On the other hand, the overlap of the definitions of the Cherebit's fractional velocity and the Kolwankar-Gangal fractional derivative is not complete.
	Notably, Kolwankar-Gangal fractional derivatives are sensitive to the critical local H\"older exponents, while the fractional velocities are sensitive to the critical point-wise H\"older exponents and there is no complete equivalence between those quantities \cite{Kolwankar2001}.

   Presented results call for a careful inspection of the claims branded under the name of "local fractional calculus". 
   Specifically, implied conditions on image function's regularity and the continuity of resulting \textsl{local fractional derivative} must be examined in all cases. 
   For example, the numerical schemes involving fractional variation operators (in our notation) with the aim of approximating fractional velocity actually make the silent assumption that the points are sampled from the set of change $\chi^{\beta}$, which must be separately verified in each case.

   \subsection{Physical applications}
   	Classical physical variables, such as velocity or acceleration, are considered to be differentiable functions of position. On the other hand, typical quantum mechanical paths \cite{Abbott1981, Amir-Azizi1987, Sornette1990} and   Brownian motion trajectories were found to be typically non-differentiable.
   	The relaxation of the differentiability assumption opens new avenues in describing physical phenomena \cite{Nelson1966, Nottale1989} but also challenges existing mathematical methods.
   	H\"olderian functions in this regard can be used as building blocks of such strongly non-linear models.
    In contrast to usual fractional derivative, the physical interpretation of this quantity is easier to establish due to its local character and the demonstrated fractional Taylor-Lagrange property.

\section*{Acknowledgments}
The work has been supported in part by a grant from Research Fund - Flanders (FWO), contract numbers G.0C75.13N, VS.097.16N.
The author would like to acknowledge Dr. Dave L. Renfro and 
Dr. Jacques L\'{e}vy V\'{e}hel for helpful feedback on the manuscript.

\bibliographystyle{plain}  
\bibliography{qvar1}

\end{document}